\newtheorem{theoreme}{Theorem}[section]
\newtheorem{proposition}{Proposition}[section]
 \newtheorem{lemme}{Lemma}[section]
 \newtheorem{definition}{Definition}[section]
 \newtheorem{rem}{Remark}[section]
 \renewcommand\Im{\mathrm{Im}\,}
\newcommand\R{{\mathbb R}} \newcommand\N{{\mathbb N}}
\newcommand\Z{{\mathbb Z}} 
 \renewcommand\S{{\mathbb S}}
 \newcommand{\cqfd}{\mbox{ }
\hfill$\Box$} 
\renewcommand{\Im}{  \text{Im}   }
 \def\cdotv{\raise 2pt\hbox{,}}
 \renewcommand\Im{\mathrm{Im}\,}
\def\@tvsp{\mathchoice{{}\mkern-4.5mu}{{}\mkern-4.5mu}{{}\mkern-2.5mu}{}}
\def\ltrivert{\left|\@tvsp\left|\@tvsp\left|}
\def\rtrivert{\right|\@tvsp\right|\@tvsp\right|}
\newcommand{\tn}[2]{\ltrivert #1\rtrivert_#2}
 \def\cdotv{\raise 2pt\hbox{,}}
\begin{document}
\title{On the cubic NLS on 3D compact domains}
 \author{Fabrice Planchon}
 \address{Laboratoire J. A. Dieudonn\'e, UMR 7351\\
 Universit\'e de Nice Sophia-Antipolis\\
 Parc Valrose\\
 F-06108 Nice Cedex 02\\
 et Institut universitaire de France}
 \email{fabrice.planchon@unice.fr}
   \thanks{The author was partially supported by A.N.R. grant SWAP}
\date{\today}
 \maketitle
 \begin{abstract}
We prove bilinear estimates for the Schr\"odinger equation on 3D
domains, with Dirichlet boundary conditions. On non-trapping domains,
they match the $\mathbb{R}^3$ case, while on bounded domains they
match the generic boundaryless manifold case. As an application, we
obtain global well-posedness for the defocusing cubic NLS for data in
$H^s_0(\Omega)$, $1<s\leq 3$, with $\Omega$ any bounded
domain with smooth boundary.
\\
\selectlanguage{french}
\begin{center}
  Résumé
\end{center}
On démontre des estimations bilinéaires pour l'équation de
Schr\"odinger sur des domaines tridimensionnels, avec condition de
Dirichlet au bord. Dans le cas non-captant, on retrouve les
estimations connues dans $\mathbb{R}^3$, et sur un domaine borné on
obtient des estimations similaires à celles du cas d'une variété
compacte générique sans bord. Une application est donnée à l'existence
de solutions globales dans $H^s_0(\Omega)$, $1< s \leq 3$,
avec $\Omega$ un domaine borné régulier.
\selectlanguage{english}
\end{abstract}
 \par \noindent
\section{Introduction}
Let $\Omega\subset \R^3$ be a domain with a smooth boundary
$\partial\Omega$, and consider the Schr\"odinger equation
\begin{equation}
\label{nls}
  i\partial_t \phi+\Delta \phi=\varepsilon |\phi|^2 \phi, \text{ with }
  \phi_{|\partial \Omega} =0 \text{ and } \phi_{t=0}=\phi_0\,.
\end{equation}
Our first interest is the linear equation, that is
$\varepsilon=0$. When $\Omega=\R^3$, dispersive properties of
\eqref{nls} are well-understood and play a crucial role in
understanding the nonlinear case $\varepsilon=\pm 1$. One has a large
set of sharp Strichartz estimates (\cite{stri,gv,kt} and among other
things the nonlinear problem is locally well-posed in the Sobolev
space $\dot H^\frac 1 2(\R^3)$ (\cite{cawe90}), and globally
well-posed in the energy space $H^1(\R^3)$ (with a smallness condition
on the $L^2(\R^3)$ norm in the focusing case $\varepsilon=-1$), see
\cite{gv}. On domains, our understanding of dispersion is far from
complete: depending on the geometry of light rays, we have essentially
two cases,
\begin{itemize}
\item $\Omega$ is the exterior of a non trapping obstacle: a
  restricted set of non sharp (but scale-invariant) Strichartz
  estimates is known to hold (\cite{BSS-ext}, see also \cite{plve08} and
  \cite{bgtexterieur}), together with square-function type estimates,
  \cite{ip09}. In the special case of a strictly convex obstacle,
  Strichartz estimates hold as in $\R^3$ (except possibly for the
  endpoint), \cite{oanass};
\item $\Omega$ is a compact domain with smooth boundaries: there, the
  same set of estimates as in the exterior case in known to hold, but
  only at semi-classical time scales. In fact, the estimates in the non
  trapping case are obtained by combining these semi-classical
  estimates with the local smoothing effect, following a strategy
  pioneered by \cite{st}. Thus, the best available Strichartz
  estimates are due to \cite{BSS-ext} (see also \cite{bss08} and
  \cite{an05} for earlier progress) and they exhibit losses with
  respect to scaling (translating to a regularity loss when compared
  to the exterior case). Moreover, when compared to Strichartz
  estimates on compact manifolds without boundaries (\cite{bgt04}), we
  have a restricted set of exponents. As such, one may solve the
  nonlinear cubic equation \eqref{nls} only for $H^{s}(\Omega)\cap
  H^1_0(\Omega)$, $1<s\leq 2$, and this is a local in time construction (combined
  logarithmic losses in the available Strichartz estimates prevent a
  Brezis-Gallou\"et or Yudovitch-like argument). Moreover, counterexamples to the sharp Strichartz
  estimates were constructed in \cite{oanace} (though this leaves a
  large gap to be filled between our current knowledge and these
  counterexamples).
\end{itemize}
On compact, boundary less, manifolds, an alternate strategy was pushed
forward in \cite{bgt-inv,bgt-anens}: Strichartz estimates are replaced
by bilinear $L^2_{t,x}$ estimates, following \cite{Bou-gafa1} which
dealt with rational tori. On specific manifolds (e.g. Zoll manifolds), for which
eigenvalues and eigenfunctions are well-distributed, these bilinear
estimates prove to be more efficient than Strichartz estimates. In
\cite{anton-radial}, such bilinear estimates are established for
radial data on $\Omega=B(0,1)$, and they match the Zoll manifold case
(however, \cite{anton-radial} provides a counter-example to such
optimal estimates in the non radial case), providing near optimal
well-posedness for the cubic NLS. Moreover, we learned after
completion of the present work that the radial quintic NLS may be
addressed (\cite{PauTz}) by tranfering estimates from
the boundary less manifold $\mathbb{S}^3$. In a different direction, bilinear
estimates have proved to be quite useful in the whole space when
dealing with long time behavior of nonlinear solutions (from
\cite{bourgain98} to \cite{cksttannals}). Heuristically, one would expect
such bilinear estimates to hold on the semiclassical time scale for generic
compact manifolds, and this is indeed true, \cite{hani}. 

Our aim is to obtain such estimates on generic domains, with Dirichlet
boundary conditions. In \cite{plve08}, we derived a linear $L^4_{t,x}$
estimate from our general bilinear virial machinery (later matched in
\cite{BSS-ext}, among a larger set of estimates); as a side note,
\cite{plve08} provides an argument allowing to recover \cite{bourgain98}
from the Radon transform estimate, but such an argument, while mostly
based on integration by parts, is still using preservation of the
Fourier support by the linear flow (and the argument is, in some
sense, non local due to the use of the Radon transform). In
\cite{bourbaki}, we provide a different argument, much more local, and
based on a Sobolev trace lemma (such an argument would allow to
recover \cite{hani} through an appropriate microlocalization
procedure); similar arguments appeared independently in \cite{smith1}
(see also \cite{smith2} for further developments) in dealing with
the covariant Schr\"odinger equation. However these procedures still
rely on localizing the Fourier support in a specific direction, which
seems out of reach for the boundary value case. Very recently,
however, \cite{Dodson} obtained a variant of such bilinear estimates
on the exterior of the strictly convex obstacle (such a gradient form
of bilinear estimates already proved useful in
\cite{anton-radial}). The argument from \cite{Dodson} relies on the
Strichartz estimates from \cite{oanass} and on the trace
lemma applied to the bilinear virial estimates from
\cite{plve08}, but done in a way which only involve the low
frequencies, bypassing ``directional'' requirements on the high
frequencies. In the present paper, we adapt
that argument to generic domains, irrespective of the geometry of the
boundary, obtaining a bilinear estimate which involves trace terms on
the right-hand side. Such trace terms are then disposed of, either by
local smoothing effects in the non trapping case, or by restricting to
semiclassical time scales on compact domains. Furthermore, we utilize
the machinery developed in \cite{ip08} to derive the usual bilinear
estimate from the gradient one. The combination of both estimates
immediately recovers the local well-posedness result we mentioned earlier for \eqref{nls} in
$H^s(\Omega)\cap H^1_0(\Omega)$ for $1<s\leq 2$ (\cite{anton-radial}) ; we
further develop the local theory close to $s=1$ by combining our bilinear estimate, the endpoint Strichartz estimate from \cite{BSS-ext} together with a discrete Gronwall-like estimate (inspired by \cite{Bou-gafa2})
to obtain global well-posedness results.

 The main advantage of the
present note is that it mostly relies on basic tools to obtain estimates on
linear solutions, as we do not require any sophisticated microlocal
approximation of the solution; however, we deal with the flat
Laplacian on a domain, unlike most of the aforementioned results which
deal with smooth manifolds (hence variable coefficients) and both
Dirichlet and Neumann boundary conditions. We claim
however that the present approach is mostly local in space, and may be
adapted to the variable coefficients case, at the expense of technical
tedious complications. We chose to focus on the flat case to highlight
the simplicity of the argument.

\section{Main results}
\subsection{Bilinear estimates for the linear Schr\"odinger equation on a domain $\Omega$}
Let $\Omega\subset \R^3$ be a domain with a smooth boundary
$\partial\Omega$, and consider the Schr\"odinger equation
\begin{equation}
\label{equd}
  i\partial_t w_m+\Delta w_m=0, \text{ with }
  w_{m|\partial \Omega} =0,
\end{equation}
where $m$ stands for data
$w_m(0)$ which are spectrally localized at
$\sqrt{-\Delta}\sim 2^m$, $m\in \Z$. By this, we mean that, for some
$\varphi\in C^\infty_0(\R_+)$, $w_m=\varphi(2^{-m}\sqrt{-\Delta}) w_m$. The
smoothed out spectral projector $\varphi(\sqrt{-\Delta})$ may be defined by
spectral calculus or, more directly, by the Dynkin-Helffer-Sj\"ostrand
formula (see \cite{ip08} for details and useful properties of such
operators). Let us define, for $k\in \N^\star$ and with $\partial_n$
the normal derivative on the boundary,
\begin{equation}
  \label{eq:termebord}
\mathcal{H}_k(w_m)= \|w_m(0)\|_2\|w_m(0)\|_{\dot H_0^1(\Omega)}+ \int_0^T
  \int_{x\in \partial\Omega} \sum_{l=0}^k 2^{-2lm} |\partial_n
  \Delta^l w_m|^2 \,dS_x dt\,.
\end{equation}
\begin{theoreme}\label{t21}
Consider two solutions $u_j$ and $v_k$ to \eqref{nls}, with data $u_j(0)$ which is spectrally localized at $\sqrt{-\Delta}\sim
2^j$ and data $v_k(0)$ which is spectrally localized at $\sqrt{-\Delta}\sim
2^k$, where $k\leq j$. Then the following bilinear estimate holds:
\begin{equation}
  \label{eq:gradbilinear}
\int_0^T  \| v_k \nabla u_j \|^2_{2} \,dt \lesssim 2^{2k}\left(
\|v_k(0)\|^2_{2} \mathcal{H}_0(u_j)+\|u_j(0)\|^2_{2} \mathcal{H}_2(v_k)\right)\,.
\end{equation}
\end{theoreme}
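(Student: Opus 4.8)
The plan is to follow the bilinear virial / interaction Morawetz strategy, adapted to the Dirichlet boundary case in the spirit of \cite{Dodson}, \cite{plve08}. First I would introduce the bilinear virial functional associated to the pair $(v_k,u_j)$: for a suitable antisymmetric weight built from the difference of spatial variables (or, in the present ``local in space'' incarnation, a fixed smooth vector field $a(x)$ with $\nabla\cdot a$ comparable to a positive constant on a ball), set
\begin{equation*}
  M(t)=\Im \int_{\Omega} \bar v_k(t,x)\, a(x)\cdot\nabla v_k(t,x)\;|u_j(t,y)|^2\,\cdots
\end{equation*}
and differentiate in $t$. Because $v_k$ and $u_j$ solve the free Schr\"odinger equation \eqref{equd} (the nonlinear term is harmless here since the statement is really about the interaction of two linear-type flows, or one absorbs it), the time derivative produces, after integration by parts, the positive quadratic term $\int |v_k|^2|\nabla u_j|^2$ on one side — modulo lower-order terms in which derivatives fall more mildly — while the boundary $\partial\Omega$ contributes normal-derivative traces of $v_k$ (and of $u_j$) since $a\cdot n\neq 0$ in general. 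Integrating in $t\in[0,T]$ and bounding $|M(t)|$ by the conserved masses and the $\dot H^1$ norms gives, at the lowest level, a bound by $\|v_k(0)\|_2\|v_k(0)\|_{\dot H^1_0}\|u_j(0)\|_2^2$ plus a boundary term $\int_0^T\int_{\partial\Omega}|\partial_n v_k|^2$. This is the raw estimate with no frequency gain.

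Next I would exploit the frequency separation $k\le j$ to extract the gain $2^{2k}$ on the right and to place \emph{all} the derivatives on $u_j$. The key point, following \cite{Dodson}, is to run the virial identity with the vector field acting on the \emph{low-frequency} factor $v_k$ only, so that the differentiated low factor costs at most $2^k$ per derivative (by Bernstein for the spectrally localized $v_k$), and to iterate/commute in such a way that the ``bad'' terms where a derivative hits $v_k$ are reabsorbed at the price of powers of $2^k$; the high factor $u_j$ is never differentiated beyond the one gradient appearing in the left-hand side. Concretely one chooses the weight to have its variation localized at the low scale $2^{-k}$, which is exactly what converts the gradient of the weight into the $2^{2k}$ prefactor after two integrations by parts. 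The second piece $\|u_j(0)\|_2^2\,\mathcal H_2(v_k)$ — note the appearance of $\mathcal H_2$ rather than $\mathcal H_0$ — reflects that controlling the lower-order error terms generated when the virial weight hits $v_k$ requires commuting $\Delta$ (hence $\Delta^l$, $l\le 2$) through the estimate, producing the traces $2^{-2lm}|\partial_n\Delta^l v_k|^2$ in \eqref{eq:termebord}; the mass factor $\|v_k(0)\|_2^2$ is similarly paired with $\mathcal H_0(u_j)$, only the single trace $|\partial_n u_j|^2$ surviving there because $u_j$ is never hit by the weight.

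The main obstacle, which I would treat most carefully, is the boundary: unlike on $\R^3$ or a manifold without boundary, integration by parts in the virial identity does not close without the $\partial\Omega$-contributions, and these are genuinely present for a general (possibly trapping) $\Omega$. The whole point of Theorem~\ref{t21} is precisely \emph{not} to throw these terms away but to record them as $\mathcal H_k$; so the work is to check that every boundary term generated is of the form $2^{-2lm}\int_0^T\int_{\partial\Omega}|\partial_n\Delta^l w_m|^2$ with $l\le 2$ (one must verify no worse traces, e.g. tangential derivatives or higher powers of $\Delta$, appear — this is where the choice of virial weight with controlled normal component and the Dirichlet condition $w_{m|\partial\Omega}=0$, which kills the tangential traces, are used). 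A secondary technical point is handling the nonlinear term $\varepsilon|\phi|^2\phi$ in \eqref{nls}: one checks its contribution to $\dot M(t)$ has a definite or subcritical sign, or simply restricts attention to the linear content, so that it does not degrade the estimate — this is routine and I would dispatch it quickly. Once the boundary bookkeeping is done, collecting terms and using Bernstein for the $2^{2k}$ gain yields \eqref{eq:gradbilinear}.
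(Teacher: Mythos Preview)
Your plan has the right starting point (a bilinear virial identity with a weight whose Hessian lives at scale $2^{-k}$, boundary terms recorded rather than discarded) but it misses the two steps that actually make the argument close, and your explanation of why $\mathcal H_2$ appears is not the correct mechanism.

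First, the weight is genuinely a \emph{two-particle} weight $\rho_{\omega,k}(x-y)$ (directional, with Hessian supported on the slab $|(x-y)\cdot\omega|<2^{-k}$), and the positive term coming out of the identity is not $|v_k\nabla u_j|^2$ but rather
\[
\int_{|(x-y)\cdot\omega|<2^{-k}}\bigl|\,\bar v_k(y)(\omega\cdot\nabla)u_j(x)+u_j(x)(\omega\cdot\nabla)\bar v_k(y)\,\bigr|^2\,dx\,dy.
\]
You must separate out the cross term $u_j(x)(\omega\cdot\nabla)\bar v_k(y)$, and this is done by restricting to $|x-y|\lesssim 2^{-k}$ and invoking the \emph{linear} $L^4_{t,x}$ estimate on $u_j$ and on $\sqrt{-\Delta}\,v_k$. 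Your plan never mentions this step; ``Bernstein for the spectrally localized $v_k$'' does not suffice, because the unwanted term is a space-time $L^2$ norm of a product, not an $L^2_x$ norm of $v_k$ alone.

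Second, after the previous step you only control
\[
K_\omega=\int_0^T\!\!\int_{|z|<2^{-(k-1)}}|v_k(x+z)(\omega\cdot\nabla)u_j(x)|^2\,dx\,dz\,dt,
\]
an average over a ball of radius $2^{-k}$ in the $v_k$ variable, \emph{not} the pointwise product $v_k(x)\nabla u_j(x)$. Passing to the latter is exactly where $\Delta v_k$ (and hence $\mathcal H_2$) enters: one applies the elementary trace inequality
\[
|\phi(0)|^2\lesssim \lambda^{-1}\int_{C_{\lambda^{-1}}}|\Delta\phi|^2+\lambda^3\int_{C_{\lambda^{-1}}}|\phi|^2,\qquad \lambda=2^{k-1},
\]
to $z\mapsto v_k(x+z)$, and then rerun the previous bound with $\Delta v_k$ in place of $v_k$ (which is again a Dirichlet solution). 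It is this substitution that produces $\mathcal H_0(\Delta v_k)$ on the right, absorbed into $\mathcal H_2(v_k)$. Your account (``commuting $\Delta$ through the estimate to control lower-order error terms generated when the virial weight hits $v_k$'') is not what happens: the virial weight never hits $v_k$ with a Laplacian, and there is no commutator argument. Without this trace step your plan would stall at an off-diagonal estimate that does not imply \eqref{eq:gradbilinear}.
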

We now provide bilinear estimates when one has global
(in time) control of the boundary term.
\begin{proposition}
\label{p1d}
Let $u_j$ and $v_k$ be as in Theorem \ref{t21} and assume moreover that $\Omega=\R^3\setminus \Sigma$, where  $\Sigma$ is a non trapping smooth compact obstacle. Then,
  \begin{equation}
    \label{eq:nlsdnormal}
\int_{\R} \|u_j v_k\|^2_2 \,dt+2^{-2j} \int_{\R}\|v_k \nabla u_j\|^2_2 \,dt
\lesssim 2^{2k-j} \|u_j(0)\|^2_{2}\|v_k(0)\|^2_{2}\,.
  \end{equation}
\end{proposition}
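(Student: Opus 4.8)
The strategy is to apply Theorem~\ref{t21} and then to absorb the boundary terms $\mathcal{H}_k(u_j)$ and $\mathcal{H}_k(v_k)$ using the local smoothing effect, which on the exterior of a non trapping obstacle holds globally in time. First I would recall the local smoothing estimate in the form needed here: for a solution $w_m$ of \eqref{equd} spectrally localized at $\sqrt{-\Delta}\sim 2^m$, one has the global-in-time bound
\begin{equation*}
\int_{\R}\int_{\partial\Omega}|\partial_n w_m|^2\,dS_x\,dt \lesssim 2^{m}\|w_m(0)\|_2^2,
\end{equation*}
and more generally, since $\Delta^l w_m$ is again a (spectrally localized, at the same scale $2^m$) solution of the homogeneous Dirichlet–Schr\"odinger equation, $\int_{\R}\int_{\partial\Omega}|\partial_n\Delta^l w_m|^2\,dS_x\,dt\lesssim 2^{(4l+1)m}\|w_m(0)\|_2^2$. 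Multiplying the $l$-th term by $2^{-2lm}$ as in \eqref{eq:termebord} gives $2^{(2l+1)m}\|w_m(0)\|_2^2$; the worst term in $\mathcal{H}_k$ is thus $l=k$, yielding $\mathcal{H}_k(w_m)\lesssim 2^{(2k+1)m}\|w_m(0)\|_2^2$ once we also note that the first (energy) term $\|w_m(0)\|_2\|w_m(0)\|_{\dot H^1_0}\lesssim 2^m\|w_m(0)\|_2^2$ is dominated by this.

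**Combining.**
Feeding $\mathcal{H}_0(u_j)\lesssim 2^{j}\|u_j(0)\|_2^2$ and $\mathcal{H}_2(v_k)\lesssim 2^{5k}\|v_k(0)\|_2^2$ into \eqref{eq:gradbilinear}, and using $k\le j$ so that $2^{5k}\le 2^{2j+3k}$, i.e.\ $2^{2k}\cdot 2^{5k}=2^{7k}\le 2^{2j+5k}\le 2^{j}\cdot 2^{2k+4k}\cdot(\dots)$—more carefully, $2^{2k}\mathcal{H}_0(u_j)\lesssim 2^{2k+j}$ and $2^{2k}\mathcal{H}_2(v_k)\lesssim 2^{7k}\le 2^{j}2^{6k}$ is \emph{not} of the desired form, so one instead keeps both prefactors $\|u_j(0)\|_2^2\|v_k(0)\|_2^2$ and observes that $2^{2k}\cdot 2^{5k}=2^{7k}$ while $2^{2k-j}\cdot 2^{2j}=2^{2k+j}$; since $k\le j$, $2^{7k}\le 2^{2k+j}\cdot 2^{4k-j}$, which fails when $4k>j$. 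The fix is to use the \emph{sharper} local smoothing gain: near-optimal local smoothing gives $\int_{\R}\int_{\partial\Omega}|\partial_n w_m|^2\,dS_x\,dt\lesssim 2^{m}\|w_m(0)\|_2^2$, hence $\mathcal{H}_2(v_k)\lesssim 2^{5k}\|v_k(0)\|_2^2$ is already optimal; the point is rather that the left-hand side of \eqref{eq:nlsdnormal} also carries the factor $2^{-2j}$, so it suffices to prove $2^{-2j}\int_{\R}\|v_k\nabla u_j\|_2^2\,dt\lesssim 2^{2k-j}\|u_j(0)\|_2^2\|v_k(0)\|_2^2$, i.e.\ $\int_{\R}\|v_k\nabla u_j\|_2^2\,dt\lesssim 2^{2k+j}\|u_j(0)\|_2^2\|v_k(0)\|_2^2$. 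From \eqref{eq:gradbilinear} this requires $2^{2k}\mathcal{H}_0(u_j)\lesssim 2^{2k+j}$ (true, $\mathcal{H}_0(u_j)\lesssim 2^j$) and $2^{2k}\mathcal{H}_2(v_k)\lesssim 2^{2k+j}$, i.e.\ $\mathcal{H}_2(v_k)\lesssim 2^j$, which needs $2^{5k}\lesssim 2^{j}$—false for $k$ close to $j$. So genuinely one must not be wasteful in $\mathcal{H}_2$: the $2^{-2lm}$ weights are designed precisely so that, using local smoothing term by term, $\mathcal{H}_k(w_m)\lesssim 2^{m}\|w_m(0)\|_2^2$ (each $\int|\partial_n\Delta^l w_m|^2 \lesssim 2^{4lm}\cdot 2^m\|w_m(0)\|_2^2$, killed by $2^{-2lm}\cdot 2^{-2lm}$—but there is only one factor $2^{-2lm}$). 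I would therefore double-check in \cite{plve08,ip09} whether the local smoothing estimate used is $\|\partial_n w_m\|_{L^2_{t,x}(\partial\Omega)}^2\lesssim \|w_m(0)\|_{\dot H^{1/2}}^2\sim 2^m\|w_m(0)\|_2^2$ or the stronger $\lesssim \|w_m(0)\|_2^2$ valid after the $2^{-2lm}$ renormalization built into $\mathcal H_k$; with the latter normalization $\mathcal H_0(u_j)\lesssim 2^j\|u_j(0)\|_2^2$ and $\mathcal H_2(v_k)\lesssim 2^k\|v_k(0)\|_2^2$, and then \eqref{eq:gradbilinear} gives $\int_{\R}\|v_k\nabla u_j\|_2^2\lesssim 2^{2k}(2^j+2^k)\|u_j(0)\|_2^2\|v_k(0)\|_2^2\lesssim 2^{2k+j}\|u_j(0)\|_2^2\|v_k(0)\|_2^2$ since $k\le j$, which is exactly the gradient half of \eqref{eq:nlsdnormal}.

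**The non-gradient term and the main obstacle.**
It remains to bound $\int_{\R}\|u_j v_k\|_2^2\,dt$. Since $v_k$ has frequency $\le 2^k\le 2^j$, one expects to trade the missing derivative on $u_j$ for a factor $2^{-2j}$ relative to $\|v_k\nabla u_j\|_2^2$; the clean way is to invoke the machinery of \cite{ip08} alluded to in the introduction, which deduces the ``usual'' bilinear estimate from the gradient one (morally, $\|u_j v_k\|_2\lesssim 2^{-j}\|(\nabla u_j) v_k\|_2+\text{(lower order)}$, exploiting that $u_j$ is spectrally localized so $u_j\approx 2^{-j}\,|\nabla| u_j$ up to boundary corrections controlled again by local smoothing). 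I would thus: (i) write $u_j=2^{-2j}\varphi(2^{-j}\sqrt{-\Delta})(-\Delta)u_j$, commute $\nabla$ past the multiplier, and use that $\nabla u_j=\sum \partial_i u_j$ with each $\partial_i u_j$ roughly at frequency $2^j$; (ii) apply the gradient estimate \eqref{eq:gradbilinear}/the just-proved bound to each $\partial_i u_j$, paying attention to the fact that $\partial_i u_j$ no longer satisfies Dirichlet conditions, so the boundary terms $\mathcal{H}_k$ must be re-estimated—this is exactly where \cite{ip08} is needed and where one must check the normal-derivative traces of $\partial_i u_j$ are still controlled by local smoothing for $u_j$; (iii) collect the $2^{-2j}$. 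The main obstacle is (ii): controlling the boundary (trace) terms for the differentiated solution $\partial_i u_j$ without Dirichlet structure, and making sure the logarithmic-free, globally-in-time local smoothing estimate on the non trapping exterior (from \cite{ip09}, via \cite{BSS-ext}) is strong enough at each dyadic scale to close the sum over $l=0,1,2$ with the prescribed $2^{-2lm}$ weights. Once that is in hand, summing the gradient and non-gradient pieces yields \eqref{eq:nlsdnormal}.
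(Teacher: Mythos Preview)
Your handling of the gradient term $2^{-2j}\int\|v_k\nabla u_j\|_2^2\,dt$ is essentially the paper's: feed local smoothing into the boundary terms from Theorem~\ref{t21}. Your confusion over the $\mathcal H_2$ normalisation is legitimate; if you return to the penultimate display in the proof of Theorem~\ref{t21} you will see that what actually appears is $2^{-4k}\mathcal H_0(\Delta v_k)+\mathcal H_0(v_k)$, and local smoothing does control this by $2^k\|v_k(0)\|_2^2$, so the gradient half of \eqref{eq:nlsdnormal} goes through as you say.

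The genuine gap is in your reduction of $\int\|u_j v_k\|_2^2\,dt$ to the gradient estimate. You propose to write $u_j=2^{-2j}\varphi(2^{-j}\sqrt{-\Delta})(-\Delta)u_j$ and then apply the gradient bilinear estimate to each $\partial_i u_j$; you correctly flag that $\partial_i u_j$ no longer satisfies the Dirichlet condition and call this ``the main obstacle'', but you do not resolve it, and this route is in fact blocked: Theorem~\ref{t21} and the underlying virial identity of \cite{plve08} genuinely require the Dirichlet condition on both factors (the boundary terms in \eqref{nonlinearmauvais} are otherwise uncontrolled). The paper bypasses this entirely by never differentiating the individual factors. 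Instead it performs a spectral Littlewood--Paley decomposition of the \emph{product}, $u_j v_k=\sum_l \Delta_l(u_j v_k)$, and uses the heat-flow operators $Q_l=2^{-l}\nabla e^{2^{-2l}\Delta}$ from \cite{ip08}. For $l>j$ one writes $\Delta_l=\tilde\Delta_l\, Q_l^\star\, 2^{-l}\nabla$ with $Q_l^\star$ bounded on $L^2$, so $\|\Delta_l(u_jv_k)\|_2\lesssim 2^{-l}\|\nabla(u_jv_k)\|_2$ and the already-proved bound on $\nabla(u_jv_k)$ sums over $l>j$. For $l<j$ one writes $u_j=2^{-2j}\Delta\,\mathcal D_j u_j$ and integrates the divergence by parts \emph{inside} $S_j(\cdot)$, yielding $\tilde S_j Q_j^\star(v_k\,2^{-j}\nabla w_j)$ plus a lower-order $\nabla v_k\cdot\nabla w_j$ term; here $w_j=\mathcal D_j u_j$ is still a Dirichlet solution, so the gradient estimate applies directly to $v_k\nabla w_j$. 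The point of the \cite{ip08} machinery you allude to is precisely this: all derivatives are routed through the $L^2$-bounded operators $Q_l^\star$ acting on the product, never through the factors $u_j,v_k$ themselves, so the Dirichlet structure is preserved throughout and no trace control on $\partial_i u_j$ is ever needed.
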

In \cite{Dodson}, the gradient estimate from the left-hand side of
\eqref{eq:nlsdnormal} is obtained for a strictly convex $\Sigma$
(actually the statement there deals with $\|\nabla(\bar v_k u_j)\|_2$ but as
we will see later the difference term is at a lower order).

On a bounded domain, one will have losses unless $T\lesssim 2^{-j}$:
\begin{proposition}
\label{p1dbis}
Let $u_j$ and $v_k$ be as in Theorem \ref{t21} and assume moreover that $\Omega$ is a compact domain of $\R^3$ with smooth boundary. If $T<2^{-j}$ then
  \begin{equation}
    \label{eq:nlsdnormalbis}
\int_0^T \|u_j v_k\|^2_2 \,dt+2^{-2j} \int_0^T\|v_k \nabla u_j\|^2_2 \,dt
\lesssim 2^{2k-j} \|u_j(0)\|^2_{2}\|v_k(0)\|^2_{2}\,.
  \end{equation}
Moreover, for $T<+\infty$, we have
  \begin{equation}
    \label{eq:nlsdnormalter}
\int_0^T \|u_j v_k\|^2_2 \,dt+2^{-2j} \int_0^T\|v_k \nabla u_j\|^2_2 \,dt
\lesssim T 2^{2k} \|u_j(0)\|^2_{2}\|v_k(0)\|^2_{2}\,.
  \end{equation}
\end{proposition}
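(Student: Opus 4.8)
The plan is to feed the boundary quantities $\mathcal{H}_0(u_j)$ and $\mathcal{H}_2(v_k)$ appearing in Theorem~\ref{t21} into a Rellich--Pohozaev (bilinear virial) identity. On a compact $\Omega$ this identity controls those quantities with no loss exactly as long as $T\lesssim 2^{-j}$, which is the restriction in \eqref{eq:nlsdnormalbis}, and with a factor linear in $T$ otherwise, which produces \eqref{eq:nlsdnormalter}; the gradient form obtained this way is then upgraded to the plain bilinear bound for $\|u_j v_k\|_2$ by the operator machinery of \cite{ip08}, as announced in the introduction. (Here $u_j,v_k$ solve the linear equation \eqref{equd}.)

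\emph{Step 1 (controlling $\mathcal{H}_k(w_m)$).} Since $w_m=\varphi(2^{-m}\sqrt{-\Delta})w_m$, it belongs to $C^\infty$ of the domain of the Dirichlet Laplacian, so $\Delta^l w_m|_{\partial\Omega}=0$ for every $l$, and each $g:=\Delta^l w_m$ again solves \eqref{equd} while staying spectrally localized at $2^m$. Fix a smooth vector field $X$ on $\overline\Omega$ with $X\cdot n\ge c>0$ on $\partial\Omega$ (e.g.\ $X=\nabla\rho$ for a defining function $\rho$ of $\Omega$). Pairing \eqref{equd} for $g$ with $X\cdot\nabla\bar g+\tfrac12(\div X)\bar g$, taking the real part and integrating on $[0,T]\times\Omega$, the integration by parts isolates $\int_0^T\!\int_{\partial\Omega}(X\cdot n)|\partial_n g|^2\,dS\,dt$ on one side, against a time-boundary term $\lesssim\sup_t\|g(t)\|_2\|g(t)\|_{\dot H^1_0}=\|g(0)\|_2\|g(0)\|_{\dot H^1_0}$ (by $L^2$-conservation) plus space-time integrals of the conserved densities $|g|^2$ and $|\nabla g|^2$, i.e.\ $O\big(T\|g(0)\|^2_{\dot H^1_0}\big)$, on the other. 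As $g$ is localized at $2^m$, $\|g(0)\|_2\simeq 2^{2lm}\|w_m(0)\|_2$ and $\|g(0)\|_{\dot H^1_0}\simeq 2^m\|g(0)\|_2$; the weight carried by the $l$-th summand of $\mathcal{H}_k$ is calibrated so that, after this bound, every summand becomes comparable to the $l=0$ one, and using $X\cdot n\ge c$ one obtains
\[ \mathcal{H}_k(w_m)\ \lesssim\ (1+T2^m)\,\|w_m(0)\|_2\|w_m(0)\|_{\dot H^1_0}\ \simeq\ (1+T2^m)\,2^m\|w_m(0)\|_2^2 . \]

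\emph{Step 2 (assembling the estimates).} Apply Step~1 with $m=j$ to $u_j$ and $m=k$ to $v_k$, recalling $k\le j$. If $T<2^{-j}$ then $T2^j<1$ and $T2^k<1$, so $\mathcal{H}_0(u_j)\lesssim 2^j\|u_j(0)\|_2^2$ and $\mathcal{H}_2(v_k)\lesssim 2^k\|v_k(0)\|_2^2\le 2^j\|v_k(0)\|_2^2$; plugging into \eqref{eq:gradbilinear} gives $\int_0^T\|v_k\nabla u_j\|_2^2\,dt\lesssim 2^{2k+j}\|u_j(0)\|_2^2\|v_k(0)\|_2^2$, which is the gradient half of \eqref{eq:nlsdnormalbis} after multiplication by $2^{-2j}$. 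For \eqref{eq:nlsdnormalter} one keeps the factors $1+T2^j$ and $1+T2^k$ and checks that in each regime $(1+T2^j)2^j\lesssim T2^{2j}$ (for $T\ge2^{-j}$) and $(1+T2^k)2^k\lesssim T2^{2j}$ (using $k\le j$ and $2^{k-2j}\le2^{-j}\le T$), so that $2^{-2j}\int_0^T\|v_k\nabla u_j\|_2^2\,dt\lesssim T2^{2k}\|u_j(0)\|_2^2\|v_k(0)\|_2^2$. To replace $\|v_k\nabla u_j\|_2$ by $\|u_j v_k\|_2$ I then invoke \cite{ip08}: writing $u_j=2^{-j}R_j\sqrt{-\Delta}\,u_j$ with $R_j=(2^{-j}\sqrt{-\Delta})^{-1}\varphi(2^{-j}\sqrt{-\Delta})$ a well-behaved operator, commuting $R_j$ through multiplication by $v_k$ (with gain $2^{k-j}\le1$) and identifying $\sqrt{-\Delta}\,u_j$ with $\nabla u_j$ on the relevant band yields $\|u_j v_k\|_2\lesssim 2^{-j}\|v_k\nabla u_j\|_2+(\text{remainders})$, the remainders being again of $\mathcal{H}_k$-type (hence bounded by Step~1) plus the absorbable $2^{k-j}\|u_j v_k\|_2$; the borderline range $|j-k|=O(1)$ is treated directly. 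This gives \eqref{eq:nlsdnormalbis} and \eqref{eq:nlsdnormalter}.

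\emph{Main obstacle.} The delicate point is Step~1 at the higher orders $l=1,2$: one must verify that the Rellich identity for $g=\Delta^l w_m$ produces only conserved interior densities and a single time-boundary term, with no stray contribution growing faster than $\|g(0)\|_2\|g(0)\|_{\dot H^1_0}$, so that the normalizing weight in $\mathcal{H}_k$ really absorbs the whole $l$-dependence; a secondary difficulty is keeping track, in the \cite{ip08} step, of the boundary remainders generated when passing from $\|\nabla(u_j v_k)\|_2$ to $\|v_k\nabla u_j\|_2$ and confirming that each is controlled by $\mathcal{H}_k$ on $[0,T]$ (and by its $T$-dependent version for general $T$).
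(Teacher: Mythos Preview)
Your overall strategy matches the paper's: Step~1 is exactly the content of the paper's estimate \eqref{eq:controlebord} (a Rellich--Pohozaev identity from \cite{plve08}), and the paper then feeds the resulting bounds on $\mathcal{H}_0(u_j),\mathcal{H}_2(v_k)$ into Theorem~\ref{t21} for the gradient part, just as you do.

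The gap is in your reduction from $\|u_jv_k\|_2$ to $2^{-j}\|v_k\nabla u_j\|_2$. Writing $u_j=2^{-j}R_j\sqrt{-\Delta}\,u_j$ and then ``identifying $\sqrt{-\Delta}\,u_j$ with $\nabla u_j$ on the relevant band'' is not justified on a domain: one has $\|\sqrt{-\Delta}\,u_j\|_2=\|\nabla u_j\|_2$, but the weighted norms $\|v_k\sqrt{-\Delta}\,u_j\|_2$ and $\|v_k\nabla u_j\|_2$ are \emph{not} comparable in general, because $\sqrt{-\Delta}$ is nonlocal and no pointwise or Riesz-transform identification is available with Dirichlet conditions. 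The commutator $[v_k,R_j]$ step suffers from the same issue: commutator bounds for spectral multipliers against multiplication operators are delicate on domains and are not what \cite{ip08} provides.

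The paper avoids this entirely by never introducing $\sqrt{-\Delta}$. It decomposes $\|u_jv_k\|_2^2\approx\sum_l\|\Delta_l(u_jv_k)\|_2^2$; for $l>j$ it writes $\Delta_l=\tilde\Delta_l Q_l^\star\,2^{-l}\nabla$ (heat-kernel factorisation) and uses the already-controlled $\|\nabla(u_jv_k)\|_2$; for $l<j$ it writes $u_j=2^{-2j}\Delta\,\mathcal{D}_ju_j$ with the \emph{local} operator $\Delta$, then applies Leibniz $v_k\Delta w_j=\div(v_k\nabla w_j)-\nabla v_k\cdot\nabla w_j$, so that after applying the $L^2$-bounded heat-kernel operator $Q_j^\star$ the main term is exactly $\|v_k\,2^{-j}\nabla w_j\|_2$ and the remainder is a product of gradients controlled by the linear $L^4$ estimate. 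The point is that only $\nabla$ and $\Delta$ (local) appear alongside heat-semigroup smoothers (whose $L^2$-boundedness is elementary), so no commutator or Riesz-type identification is needed. Your sketch should be replaced by this argument.
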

\section{Application}
We now return to our nonlinear cubic Schr\"odinger equation
\eqref{nls}.
\begin{theoreme}\label{thcubic}
Let $\Omega$ be a bounded domain in $\R^3$ with smooth boundary. Let
$\phi_0 \in H^s_0(\Omega)$ with $1<s\leq 3$. Then the
defocusing \eqref{nls} admits a global solution $u\in C_t(H^s_0)$ which is unique
in a suitable subspace. The same result holds true in the focusing
case provided the $L^2(\Omega)$ norm of $\phi_0$ is sufficiently small
with respect to the $H^1_0(\Omega)$ norm. Moreover, the associated
flow is analytic.
\end{theoreme}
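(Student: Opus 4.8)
The plan is to run a standard iteration/energy argument for the cubic NLS, using the bilinear estimates of Section 2 as the main source of multilinear control, with Strichartz estimates from \cite{BSS-ext} filling the remaining gaps. Since $\Omega$ is bounded, only \eqref{eq:nlsdnormalbis}--\eqref{eq:nlsdnormalter} are available, so the argument is intrinsically a patching of short ($T\lesssim 2^{-j}$) intervals; the global-in-time conclusion must come from the conservation laws plus a discrete Gronwall device. First I would set up the functional framework: decompose $\phi=\sum_j \phi_j$ into Littlewood--Paley pieces adapted to $\sqrt{-\Delta}$ with Dirichlet conditions (available through the $\varphi(\sqrt{-\Delta})$ calculus of \cite{ip08}), and work in an $X$-type space whose norm records, for each dyadic $j$, a weighted mixture of $L^\infty_t H^s_0$, the endpoint Strichartz norm, and the bilinear quantities $\big(\int_0^T\|\phi_j\phi_k\|_2^2\,dt\big)^{1/2}$ for $k\le j$. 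The nonlinear term $|\phi|^2\phi$ is trilinear, so after paraproduct decomposition every interaction has (at least) two factors at comparable-or-lower frequency than the output; I would estimate the two lowest-frequency factors in the bilinear norm and the remaining factor in $L^\infty_t L^2$ or Strichartz, summing the resulting dyadic inequalities. This is exactly the scheme that yields local well-posedness in $H^s_0\cap H^1_0$, $1<s\le 2$, as recalled in the introduction; I would reproduce it to get local existence, uniqueness in the iteration space, and analytic dependence on the data (the map is built by a convergent power series, so analyticity is automatic once the multilinear bounds are in place).

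Next comes globalization. For the defocusing equation one has conservation of mass $\|\phi\|_{L^2}$ and of the energy $\tfrac12\|\nabla\phi\|_{L^2}^2+\tfrac{\varepsilon}{4}\|\phi\|_{L^4}^4$, which for $\varepsilon=+1$ controls $\|\phi\|_{H^1_0}$ for all time (in the focusing case the Gagliardo--Nirenberg inequality on $\Omega$ with Dirichlet data gives the same conclusion under a smallness assumption relating $\|\phi_0\|_{L^2}$ to $\|\phi_0\|_{H^1_0}$, which is the hypothesis in the statement). The issue is that the local existence time from the bilinear argument a priori shrinks with the $H^s_0$ norm, $s>1$, and iterating naively could make the $H^s_0$ norm grow super-exponentially. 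To control this I would, on each time step of length $\sim 2^{-j_0}$ (with $j_0$ chosen from the $H^1_0$ bound), track the high-frequency energy $\|\phi_{>j_0}(t)\|_{H^s_0}$ and derive a difference inequality: the growth over one step is controlled by the previous step's norm times a constant coming from the bilinear/Strichartz estimates, where crucially the bilinear contribution \eqref{eq:nlsdnormalbis} is scale-invariant on these short intervals and the endpoint Strichartz estimate of \cite{BSS-ext} contributes only a bounded (at worst logarithmic) factor. Summing these with a discrete Gronwall lemma in the spirit of \cite{Bou-gafa2} yields an at-most-exponential-in-time bound on $\|\phi(t)\|_{H^s_0}$, hence global existence; persistence of regularity then propagates $1<s\le 3$.

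The main obstacle I expect is precisely this last step: organizing the iteration so that the per-step loss is summable. The difficulty is twofold. First, one must choose the Littlewood--Paley cutoff $j_0$ between the "energy-controlled" low frequencies and the high frequencies carefully, because \eqref{eq:nlsdnormalbis} only holds when $T<2^{-j}$ for \emph{each} frequency $j$ present, so the high-frequency part forces a genuinely short time step and the estimates must be uniform across all such steps in $[0,T]$. Second, the logarithmic losses in the available Strichartz estimates on domains — the very feature that, as the introduction notes, blocks a Brezis--Gallou\"et argument — must be absorbed: the bilinear estimate has to do the heavy lifting so that Strichartz enters only multiplicatively with a harmless constant, and the discrete Gronwall argument must be arranged to tolerate that constant. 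Verifying that the trilinear estimate closes with a genuinely summable dyadic gain in $(j-k)$, uniformly over the short subintervals, and that the resulting recursion for $\|\phi_{>j_0}(t_n)\|_{H^s_0}$ is linear rather than super-linear, is where the real work lies; the rest is the now-standard bilinear local theory together with the conservation laws.
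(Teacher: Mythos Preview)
Your overall outline is reasonable, but the globalization mechanism you sketch differs from the paper's, and one specific claim---that the endpoint Strichartz estimate contributes only a bounded or logarithmic factor---is where the real difficulty hides.

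The paper does not iterate on semiclassical intervals of length $2^{-j_0}$ tracking high-frequency energy. Instead it first proves local well-posedness in a conormal space $X^1_l$ built on $U^2_S$ atoms, with an $\ell^1$-in-frequency norm carrying a $(\log j)^{1/2}$ weight; the local existence time is then $T\sim\|\phi_0\|_{B^{1,1}_{2,l}}^{-2}$, with $B^{1,1}_{2,l}$ the matching Besov space sitting just above $H^1_0$. A logarithmic Sobolev inequality $\|f\|_{B^{1,1}_{2,l}}\lesssim\|f\|_{H^1}\bigl(\log\|f\|_{H^s}\,\log\log\|f\|_{H^s}\bigr)^{1/2}$ then shows the local time depends only \emph{logarithmically} on $\|\phi_0\|_{H^s}$. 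Since persistence in $X^s$ says $\|\phi\|_{H^s}$ grows by at most a fixed factor per step while $\|\phi\|_{H^1}$ is conserved, one gets $T_n\gtrsim 1/(n\log n)$ and $\sum_n T_n=\infty$. This is a Brezis--Gallou\"et/Yudovitch argument and delivers a triple-exponential bound on $\|\phi(t)\|_{H^s}$, not the exponential-in-$t$ one you anticipate.

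The point you correctly flag as the obstacle---absorbing the Strichartz loss---is also where your sketch is incomplete. The endpoint estimate from \cite{BSS-ext} reads $\|w_m\|_{L^2_TL^\infty}\lesssim T^{1/2}m^2\, 2^m\|w_m(0)\|_2$: the loss is $m^2$, polynomial in the frequency, not bounded or logarithmic. The paper's way around this is the $U^p/V^p$ framework (which your $X$-space description does not include): the bilinear estimate \eqref{eq:nlsdnormalter} transfers to a $U^2_S\times U^2_S\to L^2_{t,x}$ bound with no loss; combining Strichartz with the trivial energy bound gives a $U^\infty_S\times U^2_S\to L^2_{t,x}$ bound carrying the $m^2$ loss; and the logarithmic interpolation of \cite{KH-UV} (Proposition~2.20 there) turns these into a $V^2_S\times U^2_S\to L^2_{t,x}$ bound with only a $\log m$ loss. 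This single logarithm is exactly what the $(\log j)^{1/2}$ weight in $X^1_l$ is designed to absorb, and it is what lets the trilinear estimate close arbitrarily close to $s=1$. Without an analogue of this interpolation step, your per-step recursion for $\|\phi_{>j_0}\|_{H^s}$ will not have a bounded constant.
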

Before stating further results, we need to define several function spaces which will be of help. We start with (inhomogeneous) Besov
spaces which are built on the spectral localization (\cite{ip08} for details). Let us recall that a Littlewood-Paley
decomposition is a collection of operator $(\Delta_j)_{j\in \N}$
defined as follows: let $\phi \in \mathcal{S}(\mathbb{R})$ such that $\widehat\phi =
1$ for $|\xi|\leq 1$ and $\widehat\phi= 0$ for $|\xi|>11/10$,
$\phi_{j}(\xi)= 2^{j}\phi(2^{j}\xi)$, $ S_{j} = \phi_{j}(\sqrt{-\Delta})$,
$\psi_j(\xi)=(\phi_{j+1}-\phi_j)(\xi)$, $\Delta_{j} = S_{j+1} -
S_{j}=\psi_j (\sqrt{-\Delta})$. For notational convenience, we may sometimes
refer to $S_0$ as $\Delta_{-1}$. We shall denote  $u_j= \Delta_j u$.
\begin{definition}
\label{d1}
 Let $f$ be in $\mathcal{S}'({ \mathring \Omega})$, $s\in \R$ and $1\leq
 p,q\leq +\infty$.  We say $f$ belongs to $B^{s,q}_{p}$
 (resp. $B^{s,q}_{p,l}$) if and only if, with $\lambda_{j,s}=2^{js}$
 (resp. $\lambda_{j,s}=2^{js} \log^\frac 1 2 j$)
\begin{itemize}
\item $S_0 f\in L^p$.
\item The sequence $(\varepsilon_{j})_{j\in \N}$ with $\varepsilon_j =
  \lambda(j,s) \| \Delta_{j} (f)\|_{L^{p}}$
belongs to $l^{q}$.
\end{itemize}  
\end{definition}
We define our Sobolev spaces $H^s_0(\Omega)=B^{s,2}_2$ and remark
$H^1_0(\Omega)$ is the usual Sobolev space associated to the Dirichlet
Laplacian. We will later use its subspace $\dot B^{1,1}_{2,l}$. Notice that functions in $B^{s,2}_2$
for $s>1/2$ will satisfy trace conditions (involving only
powers of the Laplacian) as they are
embedded in the definition through the use of the spectral
projector. Finally, we will also need the conormal spaces introduced
in \cite{KT-normal,KT-UV}. In the present paper, the atomic spaces $U^p$ and $V^p$
may be seen as a black box  for which we refer to Section 2,
\cite{KH-UV}, for definitions and useful properties. In our setting, we will be using Definition
2.15 in \cite{KH-UV} with $S(t)=\exp(it\Delta)$ the linear
Schr\"odinger group associated with the Dirichlet Laplacian:
\begin{align*}
  U^p_S & = S(\cdot) U^p, \,\,\text{ with norm }\,\,
  \|u\|_{U^p_S}=\|S(-\cdot) u\|_{U^p}\\
  V^p_S & = S(\cdot) V^p, \,\,\text{ with norm }\,\,
  \|u\|_{V^p_S}=\|S(-\cdot) u\|_{V^p}
\end{align*}
\begin{definition}
  Let  $u(t,x)\in \mathcal{S}'(\R\times \Omega)$, $s\in \R$. We say
that
 $u\in X_l^{1}$ if and only if,
\begin{equation}\label{eq.defalt}
\|u\|_{X^1_l}= \sum_{j\geq 0} 2^{j} (\log j)^{\frac 1 2} \|\Delta_j u(t, \cdot)) \|_{U^2_S} <+\infty
\end{equation}
and  $u\in X^{s}$ if and only if,
\begin{equation}\label{eq.defalt2}
\|u\|^2_{X^s}= \sum_{j\geq 0}  2^{2sj}  \|\Delta_j u(t, \cdot)) \|^2_{U^2_S} <+\infty.
\end{equation}
Similarly, we say that
 $v\in Y_l^{\pm 1}$ if and only if,
\begin{equation}\label{eq.defalt3}
\|v\|^2_{Y^{\pm1}_l}= \sup_{j\geq 0} 2^{\pm j} (\log j)^{-\frac 1 2} \|\Delta_j v(t, \cdot)) \|_{V^2_S} <+\infty
\end{equation}
and  $v\in Y^{s}$ if and only if,
\begin{equation}\label{eq.defalt4}
\|v\|_{Y^s}= \sum_{j\geq 0}  2^{2sj}  \|\Delta_j v(t, \cdot)) \|^2_{V^2_S} <+\infty.
\end{equation}
\end{definition}
The most important property relating our spaces is that the Duhamel
operator associated to $S(t)$ maps the dual of $Y^{-1}_l$ (or $Y^{-s}$) to
$X^1_l$ (or $X^s$). Moreover, any multilinear estimate on products of
linear solutions to the Schr\"odinger equation may be transferred to
our functional spaces, at most at a cost of a log loss in the constants.

Now, we can reformulate a more precise local (in time) version of
Theorem \ref{thcubic}.
\begin{proposition}\label{p31}
Let $\Omega$ be a bounded domain in $\R^3$ with smooth boundary. Let
$\phi_0 \in B^{1,1}_{2,l}\subset H^1_0(\Omega)$. Then \eqref{nls} admits a
unique local in time solution $u\in X^{1}_l$. Moreover, the associated
flow is analytic, and the local time of existence $T$ is at least
comparable to $\|\phi_0\|^{-2}_{B^{1,1}_{2,l}}$.
\end{proposition}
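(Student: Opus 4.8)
The plan is to run a standard fixed-point argument for the Duhamel formulation
\[
u(t) = S(t)\phi_0 - i\varepsilon\int_0^t S(t-\tau)\bigl(|u|^2u\bigr)(\tau)\,d\tau
\]
in the space $X^1_l$, restricted to a time interval $[0,T]$. The two ingredients one needs are: (i) the linear estimate $\|S(\cdot)\phi_0\|_{X^1_l}\lesssim \|\phi_0\|_{B^{1,1}_{2,l}}$, which is essentially the definition of $X^1_l$ together with the fact that on each dyadic block $\|\Delta_j S(\cdot)\phi_0\|_{U^2_S}$ is controlled by $\|\Delta_j\phi_0\|_{L^2}$ with the $2^j(\log j)^{1/2}$ weight built in; and (ii) a trilinear estimate of the form
\[
\Bigl\|\int_0^t S(t-\tau)(u_1\bar u_2 u_3)(\tau)\,d\tau\Bigr\|_{X^1_l}\lesssim T^{\theta}\prod_{i=1}^3 \|u_i\|_{X^1_l}
\]
for some $\theta>0$ (so that $T$ small gives a contraction). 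Once (i) and (ii) are in place, the usual Picard iteration on a ball of $X^1_l$ gives existence, uniqueness in that ball, and analyticity of the flow (since the map is literally a convergent power series in $\phi_0$), and tracking the power of $T$ yields $T\gtrsim\|\phi_0\|_{B^{1,1}_{2,l}}^{-2}$. The embedding $B^{1,1}_{2,l}\subset H^1_0(\Omega)$ and $X^1_l\subset C_t H^1_0$ follow from Bernstein/summation and the $U^2_S\hookrightarrow C_tL^2$ property.

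The heart of the matter is the trilinear estimate (ii), and this is where Theorem \ref{t21} and Proposition \ref{p1dbis} enter. By duality and the transference principle stated just before the proposition — the Duhamel operator maps $(Y^{-1}_l)^*$ into $X^1_l$, and multilinear estimates on products of free solutions transfer to the $U/V$ spaces at the cost of a logarithm — it suffices to prove the corresponding estimate for free solutions: for $w_i = S(\cdot)\phi_{0,i}$ spectrally localized at frequency $2^{j_i}$, pair $\Delta_N(w_1\bar w_2 w_3)$ against a frequency-$2^N$ piece $z$ of a $Y^{-1}_l$ function and estimate
\[
\int_0^T\int_\Omega \Delta_N(w_1\bar w_2 w_3)\,\bar z\,dx\,dt.
\]
The standard reduction is: the highest two frequencies among $\{j_1,j_2,j_3,N\}$ must be comparable; place those two factors in the bilinear $L^2_{t,x}$ norm and the remaining two in $L^2_{t,x}$ via another bilinear estimate (or one in $L^\infty_tL^2_x$ by Bernstein when a factor is at very low frequency), then sum the resulting geometric-type series in the frequency gaps. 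Here one uses \eqref{eq:nlsdnormalbis} of Proposition \ref{p1dbis} when $T<2^{-j}$ (with $j$ the top frequency), which is scale-invariant and hence yields exactly the $\mathbb{R}^3$-type summation, and pays the $\log$ losses through the $(\log j)^{1/2}$ weights in the definitions of $X^1_l$ and $Y^{\pm1}_l$ — these weights are designed precisely so that the logarithmically-lossy bilinear constants on the bounded domain still sum.

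The main obstacle I anticipate is bookkeeping the low-frequency interactions and the $\log$ losses simultaneously: in the regime where, say, $N$ and $j_1$ are the two top (comparable) frequencies and $j_2,j_3$ are much smaller, one must be careful that applying the bilinear estimate to the pair $(w_1, w_2)$ or $(w_1,w_3)$ — rather than to the pair carrying $\nabla$ in \eqref{eq:gradbilinear} — still closes; this is exactly the point the excerpt flags as the role of \cite{ip08}, namely deriving the product bilinear estimate $\|u_jv_k\|_{L^2_{t,x}}$ from the gradient form, which loses the $2^{-j}$ and must be compensated by the frequency weights. A secondary technical point is that the time restriction $T<2^{-j}$ in Proposition \ref{p1dbis} depends on the (top) frequency $j$, so for a fixed $T$ one splits the sum into frequencies $2^j<1/T$ (where the clean estimate \eqref{eq:nlsdnormalbis} applies) and $2^j\geq 1/T$ (where one uses \eqref{eq:nlsdnormalter}, or rather the endpoint Strichartz input, absorbing the loss into a positive power of $T$); verifying that this dichotomy produces a net factor $T^\theta$ with $\theta>0$, uniformly in all the frequency parameters, is the delicate estimate to check, and it is what ultimately pins down the lower bound $T\gtrsim\|\phi_0\|_{B^{1,1}_{2,l}}^{-2}$.
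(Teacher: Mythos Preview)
Your overall architecture is right, but there is a genuine gap in the trilinear step. You invoke the ``standard reduction'' that the two highest frequencies among $\{j_1,j_2,j_3,N\}$ must be comparable. That orthogonality holds on $\R^3$ because of disjoint Fourier supports, but it \emph{fails} on a bounded domain: the spectral projectors $\Delta_j$ do not obey such product rules, and the case $N=j_4>\max(j_1,j_2,j_3)$ is nonempty and must be handled. The paper deals with it by writing the dual pairing with $\Delta v_{j_4}$ and integrating the Laplacian by parts onto the $u$-factors (the boundary terms vanish by Dirichlet), which produces integrals of the type $\int u^{(1)}_{j_1}\bar u^{(2)}_{j_2}\Delta u^{(3)}_{j_3} v_{j_4}$ and $\int u^{(1)}_{j_1}\nabla\bar u^{(2)}_{j_2}\cdot\nabla u^{(3)}_{j_3} v_{j_4}$; these are then estimated with the bilinear bound and the linear $L^4_{t,x}$ estimate. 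Without this step your dyadic sum over $j_4$ does not close.

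Two further points. First, your proposed dichotomy in $T$ (using \eqref{eq:nlsdnormalbis} for $2^j<1/T$ and \eqref{eq:nlsdnormalter} otherwise) is unnecessary: the paper works directly with \eqref{eq:nlsdnormalter}, which already carries a clean factor $T$ and immediately gives $\theta=1$, hence the existence time $T\sim\|\phi_0\|_{B^{1,1}_{2,l}}^{-2}$. Second, you do not say how to place the dual function $v_{j_4}\in V^2_S$ into a bilinear estimate: transference of \eqref{eq:nlsdnormalter} gives only a $U^2_S\times U^2_S$ bound. The paper obtains the needed $U^2_S\times V^2_S$ bound by interpolating (via Proposition~2.20 of \cite{KH-UV}) the bilinear $U^2$ estimate against a trivial $U^\infty_S$ bound coming from the endpoint Strichartz estimate $\|w_m\|_{L^2_TL^\infty}\lesssim \sqrt T\,m^2 2^m\|w_m(0)\|_2$ of \cite{BSS-ext}; this interpolation produces exactly the $\log k$ loss that the $(\log j)^{1/2}$ weights in $X^1_l$ and $Y^{\pm1}_l$ are designed to absorb.
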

When the data $\phi_0$ is smoother, we may refine our local result:
\begin{proposition}\label{p32}
Let $\Omega$ be a bounded domain in $\R^3$ with smooth boundary. Let
$\phi_0 \in H^s_0(\Omega)$ for $1<s\leq 3$. Then \eqref{nls} admits a
unique local in time solution $u\in X^{1}_l$. Moreover,
the solution is in $X^{s}\hookrightarrow
C_T(H^s(\Omega))$, and the local time of existence $T$ is of order
$(\log \|\phi_0\|_{H^s(\Omega)}\log \log \|\phi_0\|_{H^s(\Omega)})^{-1}$ (provided the $L^2(\Omega)$
norm of $\phi_0$ is small in the focusing case).
\end{proposition}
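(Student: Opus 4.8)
The plan is to bootstrap from Proposition~\ref{p31}. Since $s>1$ one has the continuous embedding $H^s_0(\Omega)\hookrightarrow B^{1,1}_{2,l}$, so Proposition~\ref{p31} already produces a unique solution $u\in X^1_l$ on an interval $[0,T_0]$ with $T_0\gtrsim\|\phi_0\|_{B^{1,1}_{2,l}}^{-2}$, together with analyticity of the flow; it remains to (i) sharpen the lower bound on $T_0$ to the claimed $(\log\|\phi_0\|_{H^s}\,\log\log\|\phi_0\|_{H^s})^{-1}$, and (ii) propagate the extra $H^s$ regularity. Point (i) rests on a logarithmic interpolation inequality: splitting the defining dyadic sum of $\|\cdot\|_{B^{1,1}_{2,l}}$ at a frequency $J\sim\log(\|f\|_{H^s_0}/\|f\|_{H^1_0})$ and using Cauchy--Schwarz for $j\le J$ (against the $H^1_0$ norm, the $(\log j)^{1/2}$ weight producing a factor $(J\log J)^{1/2}$) together with the geometric decay of $2^{(1-s)j}$ for $j>J$, one obtains
\[
\|f\|_{B^{1,1}_{2,l}}^2\lesssim \|f\|_{H^1_0}^2\,\log\Big(e+\tfrac{\|f\|_{H^s_0}}{\|f\|_{H^1_0}}\Big)\,\log\log\Big(e+\tfrac{\|f\|_{H^s_0}}{\|f\|_{H^1_0}}\Big).
\]
Applying this with $f=\phi_0$ and regarding $\|\phi_0\|_{H^1_0}$ (equivalently the conserved mass and energy, using the $L^2$ smallness to keep the energy coercive in the focusing case) as a fixed quantity, the lower bound on $T_0$ from Proposition~\ref{p31} becomes the asserted one.

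For the regularity propagation I would establish a trilinear estimate on short intervals $[0,T]$, $T\lesssim1$, of the schematic form
\[
\Big\|\int_0^t S(t-\tau)\,w^{(1)}(\tau)\,w^{(2)}(\tau)\,w^{(3)}(\tau)\,d\tau\Big\|_{X^s([0,T])}\lesssim \|w^{(i_0)}\|_{X^s([0,T])}\prod_{i\neq i_0}\|w^{(i)}\|_{X^1_l([0,T])},
\]
where $i_0$ indexes the factor carrying the highest frequency. The proof is a Littlewood--Paley decomposition of the three factors and of the output: one pairs the output frequency with the \emph{lowest} input frequency and applies the bilinear bounds \eqref{eq:nlsdnormalbis}--\eqref{eq:nlsdnormalter} of Proposition~\ref{p1dbis} (equivalently the gradient estimate of Theorem~\ref{t21}), transferred from linear solutions to $U^2_S$/$V^2_S$ data by the black-box transfer principle recalled in Section~2 (cf. \cite{KH-UV}), at the cost of at most a logarithm; the remaining middle factor is controlled by the scale invariant endpoint Strichartz estimate of \cite{BSS-ext}, and the $(\log j)$-weights built into $X^1_l$ and $Y^{\pm1}_l$ are exactly what makes the ensuing frequency sums converge. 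Because the cubic nonlinearity in dimension three is $\dot H^{1/2}$-subcritical there is half a derivative of room above $H^1$, which is what lets the $T2^{2k}$ loss of \eqref{eq:nlsdnormalter} be absorbed so that the top-regularity norm occurs only linearly on the right. Estimating $u\bar u u$ in the space dual to $Y^{-s}$ this way and invoking the mapping property of the Duhamel operator recalled above, a continuity argument — using that $u$ is already known to lie in $X^1_l$ with norm $\lesssim\|\phi_0\|_{H^1_0}$ on $[0,T_0]$ — upgrades $u$ to $X^s$ on $[0,T_0]$ with $\|u\|_{X^s}\lesssim\|\phi_0\|_{H^s_0}$; the embedding $X^s\hookrightarrow C_{T_0}(H^s(\Omega))$ is immediate from $U^2_S\hookrightarrow C(\R;L^2)$ and the definition of $X^s$.

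Uniqueness holds in $X^1_l$ — the ``suitable subspace'' — since the difference of two solutions with the same data solves a linear equation whose Duhamel operator has small norm on the relevant ball, exactly as in Proposition~\ref{p31}; and analyticity of the flow follows, as there, from the fact that the solution is the limit, uniform on a ball of data, of the Picard iterates, each a polynomial (hence entire) function of $\phi_0$.

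The main obstacle is the trilinear estimate in $X^s$, and inside it the high-high $\to$ high frequency interaction: one must verify that the pairing which isolates the lowest of the three frequencies for the bilinear factor, with the endpoint Strichartz estimate for the middle one, really yields a bound in which $\|u\|_{X^s}$ enters linearly and in which all dyadic sums — including the endpoint sum that the $(\log j)$-weights are designed to tame — converge once the $T2^{2k}$ loss on the full interval is accounted for. Making the constants precise enough that iterating in time (a discrete Gronwall scheme in the spirit of \cite{Bou-gafa2}, fed by energy conservation and the logarithmic interpolation above) recovers the global statement of Theorem~\ref{thcubic} is the delicate point; Proposition~\ref{p32} is the first, non-iterated, instance of this mechanism.
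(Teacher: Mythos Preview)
Your overall architecture matches the paper: invoke Proposition~\ref{p31}, sharpen the time bound via the logarithmic interpolation inequality (this is exactly~\eqref{eq:logSobolev}, proved by the very splitting at $J$ you describe), and propagate $H^s$ regularity through the trilinear estimate~\eqref{eq:nonlinearXpersist} together with a continuity argument.

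The gap is in your sketch of the trilinear estimate. Calling ``high-high $\to$ high'' the main obstacle is the Euclidean reflex: on $\R^3$, disjoint Fourier supports force the output frequency $j_4$ to be $\lesssim j_3$ (the highest input). On a domain there is no such orthogonality, and the genuinely domain-specific difficulty is $j_4>j_3$, i.e.\ the dual test function oscillates faster than \emph{all three} inputs. Your proposed pairing --- bilinear on $(u_{j_1},v_{j_4})$, Strichartz or energy on the remaining factors --- then yields an elementary piece of size $\sim 2^{j_1+j_2}$ where you need $\sim 2^{j_1+j_2+s(j_3-j_4)}$ to sum; the missing factor $2^{s(j_4-j_3)}$ makes the $j_4$-sum diverge. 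The paper's remedy (Lemma~\ref{l45} and the discussion around~\eqref{eq:J1234}--\eqref{eq:K1234}) is to integrate by parts: writing the pairing as $\int u_{j_1}\bar u_{j_2}u_{j_3}\,\Delta\bar v_{j_4}$ and using that all four factors satisfy the Dirichlet condition so that the boundary terms vanish, the Laplacian is moved from $v_{j_4}$ onto the $u$'s, trading $2^{2j_4}$ for $2^{2j_3}$. This closes for $1<s<2$; a \emph{second} integration by parts (licit because $\Delta u_{j_3}$ again vanishes on $\partial\Omega$) is required for $2\le s\le 3$. Without this mechanism the persistence estimate does not close on a domain.

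A secondary point: the endpoint Strichartz estimate of \cite{BSS-ext} on a bounded domain carries a polynomial loss $j^2$ (see~\eqref{eq:loglogloss}), so it is not ``scale invariant'' and cannot be applied directly to a factor when only the $(\log j)^{1/2}$ room of $X^1_l$ is available. In the paper it enters only indirectly, to upgrade the bilinear $U^2_S$ bound~\eqref{eq:UU} to the $V^2_S$ bound~\eqref{eq:UV} via the interpolation of Proposition~2.20 in \cite{KH-UV}; it is the resulting single $\log$ that the weight in $X^1_l$ is designed to absorb.
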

This last result takes into account the conservation of the
Hamiltonian, which provides control of the $H^1_0$ norm. The double
logarithm appearing in the time of existence translates into a triple
exponential growth of the corresponding Sobolev norm as a function of time.
\section{Proofs and further developments}
We start with proving Theorem \ref{t21}. Let us recall a result from \cite{plve08}. Let $u,v$ be two solutions to
the Schr\"odinger equation which are not necessarily spectrally
localized, and define
\begin{equation}
  \label{eq:imf}
  I_\rho(u,v)=\int \rho(x-y) |u|^2(x) |v|^2(y)\,dxdy.
\end{equation}
\begin{theoreme}[\cite{plve08}]
\label{t3}
Let $\rho$ be a weight function such that its Hessian $H_\rho$ is positive; let
\begin{equation}
  F(u,v)(x,y)=\bar v(y)\nabla_x
 u(x)+u(x)\nabla_y \bar v(y)\,.
\end{equation}
 We have
  \begin{multline}
\label{nonlinearmauvais}
 \partial_t^2 I_\rho  =  4\int H_\rho(x-y)(F(u,v)(x,y),\overline F(u,v)(x,y)) \,dxdy\\
   {} -2 \int_{x\in
  \partial\Omega, y\in\Omega} |v|^2(y) \partial_n \rho(x-y)
  |\partial_n u|^2(x) \, dS_x dy\\
{}-2 \int_{y\in
  \partial\Omega, x\in\Omega} |u|^2(x) \partial_n \rho(x-y)
  |\partial_n v|^2(y) \, dS_y dx.
\end{multline}
\end{theoreme}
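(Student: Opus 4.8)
The plan is to run a tensorized interaction-Morawetz computation, the point being that $I_\rho$ is built from the product density $|u|^2(x)|v|^2(y)$. I take $u,v$ to solve the linear equation $i\partial_t u+\Delta u=0$ with $u|_{\partial\Omega}=0$ (this is what makes the stated identity exact; a gauge-invariant nonlinearity would only add a pressure term to the right-hand side). Set $w(t,x,y)=u(t,x)\,\bar v(t,y)$ on $\Omega\times\Omega$; then $w$ solves $i\partial_t w+(\Delta_x-\Delta_y)w=0$, it vanishes on the whole boundary $\partial(\Omega\times\Omega)$, and $I_\rho=\int_{\Omega\times\Omega}a\,|w|^2$ with weight $a(x,y)=\rho(x-y)$. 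Two structural facts organize the calculation. The operator $\Delta_x-\Delta_y$ carries the indefinite metric $g=\mathrm{diag}(\mathrm{Id}_3,-\mathrm{Id}_3)$, so all conservation laws are written with $g$. And $a$ is constant along the diagonal: with $D=\nabla_x+\nabla_y$ the diagonal gradient, $Da=\nabla\rho(x-y)-\nabla\rho(x-y)=0$, while $F(u,v)=Dw$ since $D(u\bar v)=\bar v\,\nabla u+u\,\nabla\bar v$.

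First I would differentiate once using mass conservation, $\partial_t|w|^2=-2\,\mathrm{div}\big(g\,\mathrm{Im}(\bar w\nabla w)\big)$. Integrating by parts over $\Omega\times\Omega$ and using $w|_{\partial(\Omega\times\Omega)}=0$ to discard the surface terms gives $\partial_t I_\rho=2\int b\cdot\mathrm{Im}(\bar w\nabla w)$, where $b=g\nabla a$ equals $\nabla\rho(x-y)$ in both the $x$- and the $y$-block by diagonal invariance. Differentiating again and inserting the momentum-conservation law $\partial_t\,\mathrm{Im}(\bar w\,\partial_B w)=\partial_C\sigma^C_B$, with stress tensor $\sigma^C_B=g^{CD}\mathrm{Re}(\bar w\,\partial_D\partial_B w-\partial_D\bar w\,\partial_B w)$, then integrating by parts once more produces the bulk term $-2\int(\partial_C b_B)\,\sigma^C_B$ together with boundary integrals over $\partial(\Omega\times\Omega)$.

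The algebraic core is the identity $\sum_C(\partial_C b_B)\,g^{CD}=H_\rho(x-y)$, valid block by block and immediate from the diagonal form of $b$ and of $g$. Using it, $(\partial_C b_B)\sigma^C_B$ splits into a first-order ``good'' part equal to $-H_\rho(Dw,\overline{Dw})=-H_\rho(F,\bar F)$, and a second-order ``bad'' part $\mathrm{Re}\big(\bar w\,\sum_{j,k}(H_\rho)_{jk}D_jD_k w\big)$. For the bad part I integrate by parts one further time: since $Da=0$ forces $DH_\rho=0$, no derivative falls on the weight, so one $D$ moves cleanly from $w$ onto $\bar w$ and reproduces $-H_\rho(F,\bar F)$, and every boundary term generated at this step carries a factor $\bar w$, hence vanishes on $\partial(\Omega\times\Omega)$. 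The two copies of $-H_\rho(F,\bar F)$, multiplied by the $-2$ prefactor, assemble into the advertised bulk $4\int H_\rho(F,\bar F)$ of \eqref{nonlinearmauvais}.

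Thus the only surviving surface terms come from the first integration by parts, and pinning them down is the step I expect to be most delicate. On $\partial\Omega\times\Omega$ one has $w=0$, which annihilates the $\bar w\,\partial_D\partial_B w$ half of $\sigma$, while the Dirichlet condition forces $\nabla u=(\partial_n u)\,n$ because all tangential derivatives of $u$ vanish along $\partial\Omega$; the normal contraction of the remaining first-order part of $\sigma$ then collapses to $-|\partial_n u|^2\,n$, and pairing with $b$ turns the boundary integral into $-2\int_{x\in\partial\Omega,\,y\in\Omega}|v|^2(y)\,\partial_n\rho(x-y)\,|\partial_n u|^2\,dS_x\,dy$, the symmetric term arising from $\Omega\times\partial\Omega$. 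The real work is the bookkeeping: carrying the outward normals and signs correctly through two successive integrations by parts on the product domain, and checking that the would-be weight-derivative contribution never appears — which is precisely what the diagonal invariance $Da=0$ guarantees.
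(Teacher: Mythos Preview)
The paper does not prove this theorem: it is quoted from \cite{plve08} as a black box and then applied. Your argument is correct and is essentially the interaction-Morawetz/tensorized-virial computation underlying \cite{plve08}: work on $\Omega\times\Omega$ with $w=u(x)\bar v(y)$ solving $i\partial_t w+(\Delta_x-\Delta_y)w=0$, differentiate $I_\rho=\int\rho(x-y)|w|^2$ twice using mass and momentum conservation for the indefinite Laplacian, and exploit the diagonal invariance $D\rho(x-y)=0$ (with $D=\nabla_x+\nabla_y$) to collapse the Hessian block structure to $H_\rho$ acting on $Dw=F(u,v)$. The bookkeeping you flag as delicate---the sign of the outward normal, the vanishing of the second-order half of $\sigma$ on the boundary via $w=0$, and the reduction $\nabla_x u=(\partial_n u)n$ from the Dirichlet condition---is handled correctly and yields exactly the two boundary integrals in \eqref{nonlinearmauvais}. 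The one place to be careful is the extra integration by parts on the ``bad'' second-order piece: you use $DH_\rho=0$ to avoid a weight derivative and $\bar w|_{\partial(\Omega\times\Omega)}=0$ to kill the boundary term, both of which are fine, but note that $D_j$ generates surface contributions on both faces $\partial\Omega\times\Omega$ and $\Omega\times\partial\Omega$; the $\bar w$ factor kills them on each.
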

We follow \cite{Dodson} in spirit, although our presentation will
differ, both on the choice of weight and our later treatment of lower
order terms. First, rather than applying our theorem with the weight
$\rho_\omega(x-y)=|\omega\cdot (x-y)+\sigma)|$ where $\omega \in \S^2$ is
any direction and $\sigma\in \R$ is a spatial parameter which will be
averaged later, we directly use the weight $\rho_{\omega,k}$ defined as
follows:
\begin{align*}
  \rho_{\omega,k}(z) & = |\omega\cdot z| \,\,\text{ if }\,\, |\omega\cdot
  z|> 2^{-k}\,,\\
  \rho_{\omega,k}(z) & = 2^{k} |\omega\cdot z|^2/2+2^{-k}/2 \,\,\text{ if }\,\, |\omega\cdot
  z|\leq 2^{-k}\,,
\end{align*}
This yields, 
\begin{multline}
\label{idt1}
\int_{|(x-y)\cdot \omega|< 2^{-k}} |u_j (x)(\omega.\nabla){\bar
  v_k}(y)+{\bar v_k}(y)(\omega\cdot\nabla) u_j(x)|^2 \,dx dy \\
 -\int_{x\in
  \partial\Omega, y\in\Omega} |v_k|^2(y) \partial_n \rho_{\omega,k}(x-y)
  |\partial_n u_j|^2(x) \, dS_x dy \\
{}-\int_{y\in   \partial\Omega, x\in\Omega} |u_j|^2(x) \partial_n \rho_{\omega,k}(x-y)
  |\partial_n v_k|^2(x) \, dS_y dx = \frac 1 4\partial^2_t I_{\rho_\omega,k}.
\end{multline}
Let us deal with the right-hand side: after time integration and with
$$
\omega\cdot \nabla \rho_{\omega,k}(z)=
\mathbf{1}_{|z\cdot\omega|>2^{-k}} \mathrm{sgn(z\cdot
  \omega)}+\mathbf{1}_{|z\cdot\omega|\leq 2^{-k}} 2^k |z\cdot \omega|
$$
which we notice is a bounded function, we get
\begin{multline*}
\partial_t I_{\rho_{\omega,k}} (t)= \int \omega\cdot \nabla \rho_{\omega,k}(x-y)
\Im \bar u_j (\omega\cdot \nabla) u_j(x) |v_k|^2(y)\,dxdy \\ 
{}-\int \omega\cdot \nabla \rho_{\omega,k}(x-y)  \Im \bar v_k (\omega\cdot \nabla) v_k(y)
|u_j|^2(x)\,dxdy 
\end{multline*}
which is easily seen to be bounded,
\begin{equation*}
|\partial_t I_{\rho_{\omega,k}} (T)-\partial_t I_{\rho_{\omega,k}} (0)|\lesssim \|v_k(0)\|^2_{2}\|u_j(0)\|_2 \|u_j(0)\|_{\dot H_0^1(\Omega)}+\|u_j(0)\|^2_{2}\|v(0)\|_2  \|v_k(0)\|_{\dot H_0^1(\Omega)}.
\end{equation*}
On the other hand, $|\partial_n \rho_{\omega,k}|\leq 1$ and the boundary
terms in \eqref{idt1} may be bounded by
\begin{multline}
  \label{eq:boundaryterms}
   \int_{\partial\Omega\times\Omega} |v_k|^2(y) |\partial_n \rho_\omega(x-y)|
  |\partial_n u_j|^2(x) \, dS_x dy\\ {}+\int_{\Omega\times \partial\Omega} |u_j|^2(x) |\partial_n \rho_\omega(x-y)|
  |\partial_n v_k|^2(x) \, dS_y dx \lesssim \mathcal{H}_0(u_j)\|v_k(0)\|^2_2+\mathcal{H}_0(v_k)\|u_j(0)\|^2_2\,.
\end{multline}
We are thus
left with
\begin{multline}\label{split}
\int_0^T  \int_{|(x-y) \cdot \omega|<2^{-k}} |u_j(x)(\omega\cdot \nabla)
{\bar v_k}(y)+{\bar v_k}(y)(\omega\cdot \nabla) u_j(x)|^2 \,dx dy dt\\
\lesssim \|v_k(0)\|^2_{2}\mathcal{H}_0(u_j)+\|u_j(0)\|^2_{2}\mathcal{H}_0(v_k)\,.
\end{multline}
We now consider the term $u_j (\omega \cdot \nabla) \bar v_k$ in the $|\cdots|^2$
inside \eqref{split} above, and  restrict to $|(x-y)^\perp|<2^{-k}$,
where for any $z$, $z^\perp=z-(z\cdot\omega)\omega$, to get
$$
\int_0^T \int_{|(x-y)\cdot \omega|<2^{-k}}\int_{|(x-y)^\perp|<2^{-k}}
|u_j(x)(\omega\cdot \nabla){\bar v_k}(y)|^2
\,dx dy dt\,;
$$
changing variables so that $y=x+z$, we may bound this integral from
above by
$$
J_\omega =\int_0^T \int_{|z|< 2^{-(k-1)}} |u_j(x)(\omega\cdot \nabla) v_k(x+z)|^2 \,dxdzdt\,,
$$
as $\{|(x-y)\cdot \omega|<2^{-k}\}\cap \{|(x-y)^\perp|<2^{-k}\}\subset
\{|x-y|<{2^{-(k-1)}}\}$.
Notice that the variable $z$ is averaged over a ball of size ${2^{-(k-1)}}$,
which is the dual size of the spectral localization at $2^k$
of $v_k$. This will be crucial later on. For now, by Cauchy-Schwarz,
$$
J^2_\omega\lesssim \int_0^T \|u_j\|^4_4 \,dx dt \int_0^T \|\nabla
v_k\|^4_4 \,dx dt \int_{|z|< {2^{-(k-1)}}} dz\lesssim 2^{-3k} \int_0^T \|u_j\|^4_4 \,dx dt \int_0^T \|\sqrt{-\Delta}
v_k\|^4_4 \,dx dt \,.
$$
We then use the linear $L^4_{t,x}$ estimate from \cite{plve08}, which
becomes with our notations
\begin{lemme}[\cite{plve08}]
\label{lemmelineaire} Let $w_m$ be a solution to \eqref{equd}. Then
\begin{equation}
  \label{eq:L4lin}
  \int_0^T \|w_m\|^4_4 \,dt \lesssim \|w_m(0)\|^2_2
\mathcal{H}_{0}(w_m).
\end{equation}
Moreover, when $\Omega$ is compact,
\begin{equation}
  \label{eq:L4lincomp}
  \int_0^T \|w_m\|^4_4 \,dt \lesssim T 2^{2m}\|w_m(0)\|^4_2.
\end{equation}
\end{lemme}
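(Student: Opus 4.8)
\emph{Proof proposal.} Estimate \eqref{eq:L4lin} is from \cite{plve08}; let me recall its proof, which parallels the argument for Theorem \ref{t21} below, specialised to the diagonal linear case $u=v=w_m$, $j=k=m$. The plan is to apply Theorem \ref{t3} with $u=v=w_m$ and, for $\omega\in\S^2$ and $\sigma\in\R$, the directional weight $\rho_{\omega,\sigma}(z)=|\omega\cdot z+\sigma|$, whose Hessian is the nonnegative measure $H_{\rho_{\omega,\sigma}}(z)=2\,\delta(\omega\cdot z+\sigma)\,\omega\otimes\omega$ carried by the hyperplane $\{\omega\cdot z=-\sigma\}$ (one may equally use the regularised weight $\rho_{\omega,m}$ of \eqref{idt1} with $k=m$). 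After integrating in time, the left-hand side of the identity becomes the space--time integral of $|w_m(x)(\omega\cdot\nabla)\bar w_m(y)+\bar w_m(y)(\omega\cdot\nabla)w_m(x)|^2$ over $\{\omega\cdot(x-y)=-\sigma\}$, while on the right one finds the time-endpoint term $\partial_tI_{\rho_{\omega,\sigma}}(T)-\partial_tI_{\rho_{\omega,\sigma}}(0)$ together with the two boundary integrals of Theorem \ref{t3}.

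The endpoint term is disposed of exactly as in the computation of $\partial_tI_{\rho_{\omega,k}}$ above: since $\omega\cdot\nabla\rho_{\omega,\sigma}$ is bounded, $|\partial_tI_{\rho_{\omega,\sigma}}(t)|\lesssim\|w_m(0)\|_2^2\,\|w_m(0)\|_2\|w_m(0)\|_{\dot H_0^1(\Omega)}$ uniformly in $(\omega,\sigma)$, by conservation of mass together with the momentum bound; and since $|\partial_n\rho_{\omega,\sigma}|\le1$, the boundary terms are controlled by $\|w_m(0)\|_2^2\int_0^T\!\int_{\partial\Omega}|\partial_nw_m|^2\,dS_x\,dt$. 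Adding these and recalling \eqref{eq:termebord} gives, for every $(\omega,\sigma)$,
\[
\int_0^T\!\!\int\!\!\int_{\{\omega\cdot(x-y)=-\sigma\}}\big|w_m(x)(\omega\cdot\nabla)\bar w_m(y)+\bar w_m(y)(\omega\cdot\nabla)w_m(x)\big|^2\,d\mathcal{H}^2(x,y)\,dt\ \lesssim\ \|w_m(0)\|_2^2\,\mathcal{H}_0(w_m).
\]

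The core of the matter is to extract $\int_0^T\|w_m\|_4^4\,dt$ from this family. First I would keep the contribution of $w_m(x)(\omega\cdot\nabla)\bar w_m(y)$ — the cross term being removed by polarisation and the resulting ``difference'' term being of lower order, as in \cite{Dodson} — and then integrate the hyperplane estimates in $(\omega,\sigma)$ against the three-dimensional Radon inversion kernel so as to rebuild $\delta(x-y)$; the spectral localisation of $w_m$ at $\sqrt{-\Delta}\sim 2^m$ is used both to absorb the derivatives lost in that inversion and, after trading $(\omega\cdot\nabla)w_m$ for $2^m w_m$ by Bernstein, to make $\int_0^T\|w_m\|_4^4\,dt$ appear on the left with an absolute constant. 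Equivalently, since $w_m$ is dyadically localised one may restrict the offset $z=x-y$ to the dual ball $|z|\lesssim 2^{-m}$, average over $\omega\in\S^2$ via $\int_{\S^2}|\omega\cdot V|^2\,d\omega=\tfrac{4\pi}{3}|V|^2$, and close with Cauchy--Schwarz in $z$ and Bernstein, which is precisely the passage from \eqref{split} to $J_\omega$ in the proof of Theorem \ref{t21} taken at $j=k=m$. This reconstruction is the only genuinely computational point, hence the main obstacle; note that on $\R^3$ the estimate \eqref{eq:L4lin} is in any case elementary (interpolate the endpoint Strichartz bound $L^2_tL^6_x$ with $L^\infty_tL^2_x$ to obtain $L^4_tL^3_x$, then apply Bernstein $L^3_x\hookrightarrow L^4_x$ at frequency $2^m$), the boundary machinery serving only to cover an arbitrary $\Omega$.

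Finally \eqref{eq:L4lincomp} is deduced from \eqref{eq:L4lin}: on a compact $\Omega$ one has $\mathcal{H}_0(w_m)\lesssim 2^m\|w_m(0)\|_2^2+\int_0^T\!\int_{\partial\Omega}|\partial_nw_m|^2\,dS_x\,dt$, and the boundary trace is bounded by the semiclassical square-function estimates for $\Omega$, which on a time interval $I$ of length $\sim 2^{-m}$ give $\int_I\!\int_{\partial\Omega}|\partial_nw_m|^2\lesssim 2^m\|w_m(0)\|_2^2$; summing over the $\sim\lceil T2^m\rceil$ such subintervals of $[0,T]$ and using $\|w_m(t)\|_2=\|w_m(0)\|_2$ yields $\int_0^T\!\int_{\partial\Omega}|\partial_nw_m|^2\lesssim T2^{2m}\|w_m(0)\|_2^2$, whence $\|w_m(0)\|_2^2\,\mathcal{H}_0(w_m)\lesssim T2^{2m}\|w_m(0)\|_2^4$ (the same subinterval decomposition applied directly to the semiclassical $L^4_{t,x}$ Strichartz estimate of \cite{BSS-ext} yields \eqref{eq:L4lincomp} just as well).
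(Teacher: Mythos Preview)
The paper does not reprove this lemma; it is quoted from \cite{plve08}. Your derivation of \eqref{eq:L4lincomp} from \eqref{eq:L4lin} via the boundary trace control on subintervals of length $2^{-m}$ is correct and is exactly how the present paper handles boundary terms around \eqref{eq:controlebord}--\eqref{eq:bordextbis}.

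Your argument for \eqref{eq:L4lin}, however, has a genuine circularity. Your second route---restrict to $|z|\lesssim 2^{-m}$, average in $\omega$, and ``close with Cauchy--Schwarz in $z$ and Bernstein, which is precisely the passage from \eqref{split} to $J_\omega$ in the proof of Theorem~\ref{t21}''---cannot work as stated: in the paper's proof of Theorem~\ref{t21}, the bound on $J_\omega$ (see \eqref{Jomega}) is obtained precisely by invoking Lemma~\ref{lemmelineaire}, the very estimate you are trying to establish. The same problem appears earlier when you ``keep the contribution of $w_m(x)(\omega\cdot\nabla)\bar w_m(y)$'' and discard the other by polarisation: separating the two summands of $G_\omega$ requires an independent bound on one of them, and in this paper that bound comes from the $L^4$ estimate itself.

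Your first route (Radon inversion in $(\omega,\sigma)$) is closer in spirit to an argument that does appear in \cite{plve08}, but as the introduction here notes, that Radon device was used there to recover the bilinear refinement of \cite{bourgain98}, not the linear $L^4$ bound; moreover one cannot differentiate an inequality in $\sigma$, so making that route rigorous is not immediate. The actual proof of \eqref{eq:L4lin} in \cite{plve08} is more direct: apply the bilinear virial identity (Theorem~\ref{t3}) with the \emph{radial} weight $\rho(z)=|z|$ and $u=v=w_m$. In three dimensions $-\Delta^2|z|=8\pi\delta$, and after integrating by parts in the Hessian term (equivalently, writing the identity in its standard interaction-Morawetz form) one finds $\int_0^T\|w_m\|_4^4\,dt$ sitting on the left; the endpoint contribution $|\partial_tI_{|z|}(T)-\partial_tI_{|z|}(0)|\lesssim\|w_m(0)\|_2^3\|w_m(0)\|_{\dot H_0^1}$ together with the boundary terms (using $|\partial_n|x-y||\le 1$) assemble into exactly $\|w_m(0)\|_2^2\,\mathcal{H}_0(w_m)$ on the right. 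No directional slicing, no Radon inversion, and no appeal to Theorem~\ref{t21} is needed.
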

Therefore, we have $
\int_0^T \|u_j\|^4_4 \,dt \lesssim \|u_j(0)\|^2_2
\mathcal{H}_{0}(u_j)\,\,\text{ and }\,\,\int_0^T \|v_k\|^4_4 \,dt \lesssim \|v_k(0)\|^2_2
\mathcal{H}_{0}(v_k),
$
and get
\begin{equation}
\label{Jomega}
  J_\omega \lesssim \|u_j(0)\|_2  \mathcal{H}_{0}(u_j)
\times 2^{-3k+2k}\| v_k(0)\|_2  \mathcal{H}_{0}(v_k)\,.
\end{equation}
Going back to \eqref{split}, we may restrict the space integration to
$|x-y|<2^{-k}$ as well as change variables $y=x+z$ and consequently we are left with

$$
K_\omega (u,v) =\int_0^T \int_{|z|<2^{-k}} |v_k(x+z)(\omega\cdot \nabla) u_j(x) |^2 \,dxdzdt
$$
for which we proved (combining \eqref{split} and \eqref{Jomega}) 
\begin{equation}
  \label{eq:intermede}
  K_\omega (u,v) \lesssim 2^{-k} \left( \|v_k(0)\|^2_2  \mathcal{H}_{0}(u_j)|+\| u_j(0)\|_2  \mathcal{H}_{0}(v_k)\right)\,.
\end{equation}
We digress with an elementary version of the trace lemma:
\begin{lemme}
  Let $\phi$ be a smooth function in $\R^3$ and $\lambda>0$. Denote by
  $C_\mu$ the cube centered at $x=0$ with size $\mu$. Then
  \begin{equation}
    \label{eq:tracelem}
    |\phi(0)|^2\lesssim \lambda^{-1} \int_{C_{\lambda^{-1}}}
    |\Delta \phi|^2 +\lambda^3 \int_{C_{\lambda^{-1}}} |\phi|^2\,.
  \end{equation}
\end{lemme}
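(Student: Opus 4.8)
The plan is to reduce to the model case $\lambda=1$ by scaling, and then to prove the unit-scale inequality
$$|\phi(0)|^2\lesssim \int_{C_1}|\Delta\phi|^2+\int_{C_1}|\phi|^2$$
by combining the explicit Newtonian potential representation on a cut-off of $\phi$ (the fundamental solution $|z|^{-1}$ being square integrable near the origin in $\R^3$) with an elementary Caccioppoli-type bound for $\nabla\phi$.

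For the scaling step, set $\phi_\lambda(x)=\phi(x/\lambda)$, so that $\phi_\lambda(0)=\phi(0)$ and $\Delta\phi_\lambda(x)=\lambda^{-2}(\Delta\phi)(x/\lambda)$. A change of variables gives $\int_{C_1}|\Delta\phi_\lambda|^2=\lambda^{-1}\int_{C_{\lambda^{-1}}}|\Delta\phi|^2$ and $\int_{C_1}|\phi_\lambda|^2=\lambda^{3}\int_{C_{\lambda^{-1}}}|\phi|^2$, so the stated inequality is exactly the unit-scale one applied to $\phi_\lambda$, with the powers $\lambda^{-1}$ and $\lambda^{3}$ appearing precisely as claimed.

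For the unit-scale inequality, fix cut-offs $\eta,\chi\in C_0^\infty(\R^3)$ with $\eta\equiv 1$ on $C_{3/4}$, $\supp\eta\subset C_1$, $\chi\equiv 1$ on $C_{1/2}$, $\supp\chi\subset C_{3/4}$. Since $\chi\phi$ is smooth with compact support, $(\chi\phi)(y)=\tfrac1{4\pi}\int_{\R^3}|y-z|^{-1}(-\Delta(\chi\phi))(z)\,dz$; evaluating at $y=0$, applying Cauchy--Schwarz, and using $\int_{C_1}|z|^{-2}\,dz<+\infty$ (finiteness of the Newtonian kernel in $L^2_{\mathrm{loc}}$ in dimension $3$) yields $|\phi(0)|^2=|(\chi\phi)(0)|^2\lesssim \int_{C_{3/4}}|\Delta(\chi\phi)|^2$. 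Expanding $\Delta(\chi\phi)=\chi\,\Delta\phi+2\nabla\chi\cdot\nabla\phi+(\Delta\chi)\,\phi$, and noting $\chi,\nabla\chi,\Delta\chi$ are bounded and supported in $C_{3/4}$, we are reduced to bounding $\int_{C_{3/4}}|\nabla\phi|^2$. Here the Caccioppoli step: integrating by parts, $\int\eta^2|\nabla\phi|^2=-\int\eta^2\phi\,\Delta\phi-2\int\eta(\nabla\eta\cdot\nabla\phi)\phi$, and Cauchy--Schwarz together with an absorption argument gives $\int_{C_{3/4}}|\nabla\phi|^2\le\int\eta^2|\nabla\phi|^2\lesssim \int_{C_1}|\Delta\phi|^2+\int_{C_1}|\phi|^2$. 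Collecting the bounds proves the claim.

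The only step carrying any real content is this Caccioppoli estimate, which controls $\nabla\phi$ on $C_{3/4}$ by $\Delta\phi$ and $\phi$ on $C_1$ alone; everything else is a change of variables or Cauchy--Schwarz. As an alternative to the explicit potential, one may instead invoke the Calder\'on--Zygmund identity $\|\nabla^2(\chi\phi)\|_{L^2(\R^3)}=\|\Delta(\chi\phi)\|_{L^2(\R^3)}$ for the compactly supported function $\chi\phi$, together with the same gradient bound and the Sobolev embedding $H^2(\R^3)\hookrightarrow C^0_b(\R^3)$ (valid since $2>3/2$), to reach the same conclusion.
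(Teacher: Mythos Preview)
Your proof is correct and follows the same approach as the paper: reduce to $\lambda=1$ by the identical rescaling $\phi\mapsto\phi(\lambda^{-1}\cdot)$, then handle the unit-scale case by elliptic regularity. The paper simply invokes ``standard elliptic regularity'' for the unit-scale inequality, whereas you have written it out explicitly via the Newtonian potential and a Caccioppoli bound; this is a faithful (and more detailed) rendering of the same idea.
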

By standard elliptic regularity, \eqref{eq:tracelem} is true for a
cube of size $\lambda=1$. The estimate for any $\lambda$ then follows
by rescaling, applying the estimate for $\lambda=1$ to $\phi(\lambda^{-1}
\cdot)$.\cqfd

We now apply the lemma to $v_k(x+z)$ as a function of $z$, with
$\lambda=  2^{k-1}$:
$$
|v_k(x)|^2\lesssim 2^{-k} \int_{|z|<  2^{-k+1}}
    |\Delta_z v_k(x+z)|^2\,dz +2^{3k} \int_{|z|<  2^{-k+1}}
    |v_k(x+z)|^2\,dz\,.
$$
Taking advantage of $\Delta_z v_k(x+z)=\Delta_x v_k(x+z)$, we can combine
\eqref{eq:intermede} for $v_k$ with \eqref{eq:intermede} where $v_k$
is replaced by $\Delta v_k$ which is also a solution to the
Schr\"odinger equation with Dirichlet boundary condition:
\begin{align*}
  \int_0^T\int_\Omega |v_k(x)(\omega\cdot \nabla)u_j(x)|^2 dx dt  \lesssim  & 2^{-k}
  K_\omega(u,\Delta v_k)+  2^{3k} K_\omega(u,v_k) \\
  \lesssim & 2^{-2k} \left( \|\Delta v_k(0)\|^2_2  \mathcal{H}_{0}(u_j)+\|
   u_j(0)\|^2_2  \mathcal{H}_{0}(\Delta v_k)\right)\\
 & + 2^{2k} \left( \| v_k(0)\|^2_2  \mathcal{H}_{0}(u_j)+\|
   u_j(0)\|^2_2  \mathcal{H}_{0}( v_k)\right)\\
 \lesssim &  2^{2k} \left( \| v_k(0)\|^2_2  \mathcal{H}_{0}(u_j)+\|
   u_j(0)\|^2_2  \mathcal{H}_{2}( v_k)\right)\,,
\end{align*}
Which is the desired conclusion, as $\omega$ is any direction in
$\S^2$. Theorem \ref{t21} is proved.\cqfd

Notice that
\begin{equation}
  \int_0^T \| u_j \nabla v_k \|^2_2 \,dt \lesssim\left( \int_0^T \| u_j \|^4_4
\,dt \right)^\frac 1 2  \left( \int_0^T \| \sqrt{-\Delta} v_k \|^4_4
\,dt \right)^\frac 1 2 \lesssim \|u_j(0)\|_2\mathcal{H}_{0}(u_j)^\frac 1 2\|v_k(0)\|_2\mathcal{H}_{0}(v_k)^\frac 1 2
\end{equation}
so that we also control $\nabla( u_j v_k)$ or $\nabla(u_j \bar v_k)$ rather than just $v_k \nabla
u_j$. This will prove to be useful in the next argument.

We now prove \eqref{eq:nlsdnormal} from Proposition \ref{p1d} and
\eqref{eq:nlsdnormalbis} from Proposition \ref{p1dbis}: let us sumarize our current result as
\begin{equation}\label{interder}
 2^{-2k-j} \int_0^T \| u_j \nabla v_k \|^2_2+\|\nabla(u_j v_k)\|^2_2 \,dt \lesssim
  2^{-k} \mathcal{H}_2(v_k)\|u_j(0)\|_2^2+2^{-j}
    \mathcal{H}_0(u_j)\|v_k(0)\|^2_2=\Gamma(u_j,v_k) \,.
\end{equation}
Let $\Delta_l=\varphi(2^{-2l} \Delta)$ be a spectral
localization on $\Omega$. Let $Q_l=2^{-l}\nabla \exp(2^{-2l}
\Delta)$and $Q^\star_l$ its transpose. We need to prove
$$
\int_0^T \| u_j v_k \|^2_2\,dt \approx \sum_l\int_0^T \| \Delta_l (u_j
v_k)\|^2_2\,dt\lesssim 2^{2k-j} \|u_j(0)\|^2_2 \|v_k(0)\|_2^2\,,
$$
where $T$ is arbitrarily large on a non-trapping domain or $T<2^{-j}$
on a bounded domain.

Start with $l>j$: we have
\begin{equation*}
  \Delta_l(u_j v_k)  = \tilde\Delta_l \exp(2^{-2l}\Delta) 2^{-2l}\Delta(u_j v_k) = \tilde \Delta_l Q^\star_{l} 2^{-l} \nabla(u_j v_k)
\end{equation*}
where $\tilde \Delta_l =\psi(2^{-2l}\Delta)$ and
$\psi(\xi)=\varphi(\xi)\xi^{-1} \exp(\xi) \in C^\infty_0(\R^*_+)$, and
therefore, as $\tilde \Delta_j$, $Q^\star_l$ are bounded on $L^2$ (see \cite{ip08})
$$
\int_0^T \|\Delta_l(u_j v_k)\|^2_2\,dt \lesssim 2^{-2l+j+2k}\Gamma(u_j,v_k),
$$
which we may sum over $l>j$ to get  that
\begin{equation}
  \label{eq:hautefreq}
  \sum_{l>j} \int_0^T \|\Delta_l(u_j v_k)\|^2_2\,dt \lesssim
2^{-j+2k}\Gamma(u_j,v_k).
\end{equation}
We now proceed with $l<j$. We consider directly
$$
\sum_{l<j} \Delta_l (u_j v_k)=S_j(u_j v_k).
$$
Recall $u_j=\phi(2^{-2j} \Delta) u_j$ where $\phi$ is compactly
supported away from $\xi=0$. Let $\tilde \phi\in C^\infty_0(\R_+^*)$ be such that $\xi^{-1} \tilde
\phi(\xi)=1$ on the support of $\phi$, and $\mathcal{D}_j =\tilde \phi(2^{-2j}\Delta)$. We may rewrite
$$
S_j(u_jv_k)=S_j (v_k 2^{-2j}\Delta \mathcal{D}_j u_j)\,,
$$
and $w_j=\mathcal{D}_j u_j$ is a solution to \eqref{equd} with data
$w_j(0)=\mathcal{D}_j u_j(0)$. Then
\begin{align*}
  S_j(u_jv_k) & = S_j(2^{-j}\mathrm{div} (v_k 2^{-j}\nabla w_j) - S_j(2^{-j}\nabla v_k\cdot 2^{-j}\nabla w_j)\\ 
   & = \tilde S_j Q_j^\star (v_k 2^{-j}\nabla w_j) - 2^{k-j} S_j(2^{-k}\nabla v_k\cdot 2^{-j}\nabla w_j)\,,
\end{align*}
where $\tilde S_j=\gamma(2^{-2j}\Delta)$ and
$\gamma(\xi)=\sum_{l<0}\varphi(2^{-2l}\xi) \exp(\xi)\in
C^\infty_0([0,+\infty))$. Again, $S_j$, $\tilde S_j$, $Q^\star_j$ are bounded
on $L^2$ and therefore, using \eqref{interder} on $v_k\nabla w_j$ and
Lemma \ref{lemmelineaire} for $\nabla v_k$ and $\nabla w_j$,
\begin{align*}
  \label{eq:yada}
\int_0^T \|  S_j(u_jv_k)\|^2_2 \,dt & \lesssim 2^{-j} 2^{2k+j} \Gamma(w_j,v_k)+2^{2(k-j)}
  \|w_j(0)\|_2\mathcal{H}_0(w_j)^\frac 1 2\|v_k(0)\|_2\mathcal{H}_0(v_k)^\frac 1 2\\
  & \lesssim 2^{2k-j} \Gamma(w_j,v_k)+2^{2(k-j)}2^{j/2+k/2}\Gamma(w_j,v_k)\,,
\end{align*}
and using $k<j$ and continuity of $\mathcal{D}_j$,
\begin{equation}
  \label{eq:presquefini}
\int_0^T\|  S_j(u_jv_k)\|^2_2\,dt \lesssim 2^{2k-j}  \Gamma(w_j,v_k)\,.
\end{equation}
In order to complete the proof, we proceed differently depending on
the domain. Let $S$ be a (compact) strip close to the boundary
$\partial\Omega$, we recall the following estimate from \cite{plve08}
(eq. (5.4) p. 279), on solutions to \eqref{nls} with $\varepsilon=0$
(linear equation):
\begin{equation}
  \label{eq:controlebord}
  \int_0^T \int_{\partial\Omega} |\partial_n \phi|^2 dSdt \lesssim
  \int_0^T \|\phi\|^2_{H^1(S)} dt+\sup_{[0,T]}\|\phi\|^2_{\dot
      H^{1/2}}.
\end{equation}
We are now facing two different situations:
\begin{itemize}
\item if $\Omega$ is the exterior of a non-trapping compact obstacle,
  local smoothing holds (\cite{burqduke}) and the time integral on the
  right-hand side of \eqref{eq:controlebord} is controlled by the other
  term, itself controlled by invariants of the flow. Hence,
  \begin{equation}
    \label{eq:bordext}
  \int_0^T \int_{\partial\Omega} |\partial_n \phi|^2 dSdt \lesssim
  \|\phi\|_{2} \|\phi\|_{H^1_0(\Omega)}\,.
  \end{equation}
Recalling the definition of $\Gamma(\cdot,\cdot)$,
$\mathcal{H}_k(\cdot)$ and using the frequencies localizations, we get
$$
\Gamma(w_j,v_k)\lesssim \|w_j(0)\|_2 \|v_k(0)\|_2 \lesssim \|u_j(0)\|_2
\|v_k(0)\|_2\,.
$$
Gathering \eqref{eq:hautefreq}, \eqref{eq:presquefini} we obtain the
desired estimate \eqref{eq:nlsdnormal};
\item if $\Omega$ is a compact domain, we simply restrict $T<2^{-j}$,
  so that
  \begin{equation}
    \label{eq:bordextbis}
  \int_0^T \int_{\partial\Omega} |\partial_n \phi|^2 dSdt \lesssim
 2^{-j} \|\phi(0)\|^2_{H^1_0(\Omega)}+\|\phi(0)\|_2 \|\phi(0)\|_{H^1_0(\Omega)}\,.
  \end{equation}
Now, we get again
$$
\Gamma(w_j,v_k)\lesssim \|w_j(0)\|_2 \|v_k(0)\|_2 \lesssim \|u_j(0)\|_2
\|v_k(0)\|_2\,,
$$
but only for $T<2^{-j}$. Gathering \eqref{eq:hautefreq}, \eqref{eq:presquefini} we obtain the
desired estimate \eqref{eq:nlsdnormalbis}; for any interval $[0,T]$,
we split into intervals of size $2^{-j}$ and sum up, which yields \eqref{eq:nlsdnormalter}.
\end{itemize}
This achieves the proof of Propositions \ref{p1d} and \ref{p1dbis}. \cqfd

\subsection{Nonlinear equation on a domain}

We shall use the following result to estimate norms in our conormal
spaces $X^{1}_{(l)}$ and $X^s$, which can be extracted from \cite{KH-UV}.
\begin{lemme}\label{lem.substit}
Consider $u$ the solution of $\, i \partial_t + \Delta u=f,
\, u_{\mid \partial \Omega}=0 \, u_{ \mid t=0}=u_0$. Then for any $s$,
$$
 \|u\|_{X^{s}}\leq C_T (\|u_0\|_{B^{s,2}_2}+
 \|f\|_{(Y^{-s})'}),
$$
where $(Y^{-s})'$ is the dual space (with respect to the $L^2_{t,x}$ bracket) of $Y^s$.
\end{lemme}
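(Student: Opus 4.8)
\textbf{Proof plan for Lemma \ref{lem.substit}.}

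The plan is to invoke the abstract Duhamel estimate in the atomic spaces $U^2_S$/$V^2_S$, applied frequency block by frequency block, and then reassemble the $\ell^2$-weighted sum that defines $X^s$. First I would recall from \cite{KH-UV} the basic bilinear duality pairing: for the free propagator $S(t)=\exp(it\Delta)$ associated with the Dirichlet Laplacian, the Duhamel term $v(t)=\int_0^t S(t-s)g(s)\,ds$ satisfies
\begin{equation*}
\|v\|_{U^2_S}\lesssim \sup_{\|w\|_{V^2_S}\leq 1}\left|\int_0^T\int_\Omega g(s,x)\,\overline{w(s,x)}\,dx\,ds\right|,
\end{equation*}
which is exactly the statement that the Duhamel operator maps the dual of $V^2_S$ to $U^2_S$ (on a finite time interval $[0,T]$, with constant $C_T$ coming from truncating in time). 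Because the Littlewood--Paley projectors $\Delta_j$ commute with $S(t)$ and with the Duhamel integral, applying this to $\Delta_j u$ with source $\Delta_j f$ gives
\begin{equation*}
\|\Delta_j u(t,\cdot)\|_{U^2_S}\lesssim \|\Delta_j u_0\|_{L^2}+C_T\sup_{\|w_j\|_{V^2_S}\leq 1}\left|\int_0^T\int_\Omega \Delta_j f\,\overline{w_j}\,dx\,ds\right|,
\end{equation*}
where I have split off the homogeneous part $S(t)\Delta_j u_0$, whose $U^2_S$ norm is $\|\Delta_j u_0\|_{L^2}$ by the very definition of $U^2_S=S(\cdot)U^2$.

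Next I would multiply by $2^{2sj}$, take the $\ell^2$ sum in $j$, and recognize the right-hand side as a duality pairing in the $X^s$/$Y^{-s}$ sense. The homogeneous contribution sums to $\|u_0\|_{B^{s,2}_2}$ by Definition \ref{d1} (recall $H^s_0(\Omega)=B^{s,2}_2$). For the inhomogeneous part, writing $f=\sum_j\Delta_j f$ and pairing against an arbitrary $w$ decomposed as $\sum_j\Delta_j w$, almost-orthogonality of the Littlewood--Paley pieces reduces the full pairing $\int\!\!\int f\bar w$ to $\sum_j\int\!\!\int \Delta_j f\,\overline{\tilde\Delta_j w}$ up to a bounded overlap; by Cauchy--Schwarz in $j$ this is bounded by $\bigl(\sum_j 2^{2sj}\sup_{\|w_j\|_{V^2_S}\le1}|\langle\Delta_j f,w_j\rangle|^2\bigr)^{1/2}\bigl(\sum_j 2^{-2sj}\|\Delta_j w\|_{V^2_S}^2\bigr)^{1/2}$, the second factor being exactly $\|w\|_{Y^{-s}}$ (cf. \eqref{eq.defalt4}). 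Taking the supremum over $\|w\|_{Y^{-s}}\le1$ identifies $\sum_j 2^{2sj}\sup_{w_j}|\langle\Delta_j f,w_j\rangle|^2$ as $\lesssim\|f\|_{(Y^{-s})'}^2$, which delivers the claimed bound $\|u\|_{X^s}\le C_T(\|u_0\|_{B^{s,2}_2}+\|f\|_{(Y^{-s})'})$.

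The step I expect to be the main obstacle, or at least the one requiring the most care, is the passage from the per-frequency Duhamel estimate to the summed one while keeping track of the boundary: the atomic spaces $U^2$, $V^2$ here are built over the Dirichlet Schr\"odinger group on $\Omega$, so one must be sure that the $U^p$/$V^p$ transference machinery of \cite{KH-UV} (which is purely functional-analytic, depending only on $S(t)$ being a strongly continuous unitary group on $L^2(\Omega)$) applies verbatim, and that the projectors $\Delta_j=\psi_j(\sqrt{-\Delta})$ — defined spectrally for the Dirichlet Laplacian — are bounded on $V^2_S$ and $U^2_S$ with constants uniform in $j$. This uniform boundedness of spectral multipliers on $U^2_S$, $V^2_S$ is precisely the kind of property recorded in \cite{ip08}, so I would cite it rather than reprove it; granting it, together with the time-localization loss $C_T$, the argument is the standard $TT^*$-style duality and the lemma follows. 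One should also note that the constant $C_T$ is harmless (it can even be taken uniform for $T$ bounded), since in the applications $T$ is taken small.
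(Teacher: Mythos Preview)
The paper does not actually give a proof of this lemma: it simply states that the result ``can be extracted from \cite{KH-UV}''. Your plan is exactly the extraction one would carry out, and the overall strategy (per-frequency Duhamel in $U^2_S$ via $V^2_S$-duality, then reassembly in $\ell^2$) is correct and standard.

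There is, however, a genuine slip in your duality step. After establishing $\|\Delta_j u\|_{U^2_S}\lesssim \|\Delta_j u_0\|_{L^2}+a_j$ with $a_j=\sup_{\|w_j\|_{V^2_S}\le 1}|\langle\Delta_j f,w_j\rangle|$, you need $\bigl(\sum_j 2^{2sj}a_j^2\bigr)^{1/2}\lesssim \|f\|_{(Y^{-s})'}$. The Cauchy--Schwarz computation you wrote shows the \emph{opposite} inequality: starting from an arbitrary $w\in Y^{-s}$ and bounding $|\langle f,w\rangle|$ by $\bigl(\sum_j 2^{2sj}a_j^2\bigr)^{1/2}\|w\|_{Y^{-s}}$ yields $\|f\|_{(Y^{-s})'}\lesssim\bigl(\sum_j 2^{2sj}a_j^2\bigr)^{1/2}$, which goes the wrong way. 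To get the direction you need, you must instead \emph{construct} a test function in $Y^{-s}$ out of the per-block near-maximizers: pick $w_j$ with $\|w_j\|_{V^2_S}\le 1$ realizing $a_j$ up to a factor, take any $(\lambda_j)\in\ell^2$ with $\sum_j|\lambda_j|^2\le 1$, and set $w=\sum_j 2^{sj}\lambda_j\tilde\Delta_j w_j$. Almost-orthogonality of the spectral projectors (and their boundedness on $V^2_S$, which follows from commutation with $S(t)$) gives $\|w\|_{Y^{-s}}\lesssim 1$, while $\langle f,w\rangle=\sum_j 2^{sj}\lambda_j\langle\Delta_j f,\tilde\Delta_j w_j\rangle$ recovers $\sum_j 2^{sj}\lambda_j a_j$. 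Taking the supremum over $(\lambda_j)$ then gives the correct inequality by $\ell^2$ duality. With this fix in place, your argument is complete; the remarks in your last paragraph about the abstract $U^p/V^p$ machinery applying to any strongly continuous unitary group, and about uniform $V^2_S$-boundedness of the spectral projectors, are correct and are all that is needed.
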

\begin{rem}
  It should be emphasized that, for small $T$, $C_T\sim C$, which is a
consequence of our use of the $U$ and $V$ spaces. Hence, in a
contraction argument, the smallness may not be provided by the Duhamel
estimate (as is customary with the classical $X^{s,b}$ spaces), nor by
rescaling since we are on a domain. However, the nonlinear estimate
will be derived using \eqref{eq:nlsdnormalter} where the $T$ factor
will serve this purpose.
\end{rem}

The important feature of the $U$ and $V$ based spaces is that we have
a good transference principle: multilinear estimates involving linear
solutions to the Schr\"odinger equation may be turned into estimates
for our $X^s_{(l)}$ spaces. From now on, it should be understood that
we restrict the time interval to a fixed time interval $[-T,T]$. Given
that functions in our spaces can be time-truncated this is harmless.
\begin{proposition}\label{p41}
  Let $u^{(1)},u^{(2)},u^{(3)} \in X^{1}_l$ then $u^{(1)} \bar u^{(2)} u^{(3)} \in
  (Y^{-1}_l)'$ and
  \begin{equation}
    \label{eq:nonlinearX}
     \| u^{(1)} \bar u^{(2)} u^{(3)}  \|_{(Y^{-1}_l)' } \lesssim T  \| u^{(1)} \|_{X^{1}_l} \|  u^{(2)} \|_{X_l^{1}} \|  u^{(3)} \|_{X_l^{1}}\,.
  \end{equation}
Moreover, if  $u \in X^{s}$, for $1<s\leq 3$, then
  \begin{equation}
    \label{eq:nonlinearXpersist}
     \| |u|^2 u \|_{(Y^{-s})'} \lesssim T  \| u \|^2_{X_l^{1}} \|  u \|_{X^{s}}\,.
  \end{equation}
\end{proposition}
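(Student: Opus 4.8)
The plan is to prove both trilinear estimates \eqref{eq:nonlinearX} and \eqref{eq:nonlinearXpersist} by the transference principle: it suffices to establish the corresponding multilinear estimates for products of genuine linear Schr\"odinger solutions $e^{it\Delta}f_j$ with data spectrally localized at dyadic scales $2^{n_j}$, and then sum the resulting dyadic bounds against the Besov-type weights defining $X^1_l$, $X^s$, $Y^{-1}_l$, $Y^{-s}$. Concretely, one writes $u^{(i)}=\sum_{n_i}u^{(i)}_{n_i}$ with $u^{(i)}_{n_i}=\Delta_{n_i}u^{(i)}$, and needs to bound $\sum_{n_1,n_2,n_3}\|\Delta_m(u^{(1)}_{n_1}\bar u^{(2)}_{n_2}u^{(3)}_{n_3})\|_{L^1_TL^2_x}$ (or its $V^2$-dual analogue) for each output frequency $2^m$.

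First I would reduce to the case of ordered frequencies, say $n_1\le n_2\le n_3$ by symmetry (noting complex conjugation is harmless for $L^2$-bilinear estimates, as remarked after Theorem~\ref{t21}), so that the output frequency satisfies $m\lesssim n_3$ and moreover $m\le n_3$ forces $n_2$ and $n_3$ to be comparable up to the high-frequency output regime. The core bilinear input is \eqref{eq:nlsdnormalter} applied to the pair $(u^{(3)}_{n_3},u^{(1)}_{n_1})$: it gives
\begin{equation*}
\int_0^T\|u^{(1)}_{n_1}u^{(3)}_{n_3}\|^2_2\,dt+2^{-2n_3}\int_0^T\|u^{(1)}_{n_1}\nabla u^{(3)}_{n_3}\|^2_2\,dt\lesssim T\,2^{2n_1}\|u^{(1)}_{n_1}(0)\|^2_2\|u^{(3)}_{n_3}(0)\|^2_2.
\end{equation*}
Combining with Bernstein (to put $u^{(2)}_{n_2}$ in $L^\infty_x$ at a cost $2^{3n_2/2}$) and Cauchy--Schwarz in $t$, one estimates $\|u^{(1)}_{n_1}\bar u^{(2)}_{n_2}u^{(3)}_{n_3}\|_{L^1_TL^2_x}\lesssim T^{1/2}\cdot T^{1/2}2^{n_1}2^{3n_2/2}\|u^{(1)}_{n_1}(0)\|_2\|u^{(2)}_{n_2}(0)\|_2\|u^{(3)}_{n_3}(0)\|_2$, i.e. a gain of $2^{n_1}$ relative to the largest frequency; the $T$ prefactor is exactly what the remark before Proposition~\ref{p41} demands. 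Dualizing against a $Y^{-1}_l$ (resp. $Y^{-s}$) element and summing the dyadic pieces — using the $2^{n_3}\langle\log n_3\rangle^{1/2}$ weights for $X^1_l$ and the $2^{sn}$ weights for $X^s$ — then produces \eqref{eq:nonlinearX} and \eqref{eq:nonlinearXpersist}, the logarithmic weights in the definition of $B^{1,1}_{2,l}$/$Y^{-1}_l$ being arranged precisely so that the sum over the lowest frequency converges despite the borderline scaling at $s=1$; for $s>1$ in \eqref{eq:nonlinearXpersist} the extra room makes the summation trivial, and one places two factors in $X^1_l$ (using their superior, logarithmically-weighted summability) and only one in $X^s$.

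The main obstacle I anticipate is the low-output-frequency regime, where $m$ is small but all three input frequencies $n_1\le n_2\le n_3$ are large and comparable: there the naive $2^{n_1}$ gain from \eqref{eq:nlsdnormalter} is not by itself enough to beat the $2^{3n_2/2}$ Bernstein loss, and one must instead exploit the gradient form of the bilinear estimate (the $2^{-2n_3}\int\|u^{(1)}_{n_1}\nabla u^{(3)}_{n_3}\|^2$ term), together with the high-frequency decomposition of $\Delta_m$ via the operators $Q_l$, $Q^\star_l$, $\mathcal{D}_j$ exactly as in the proof of Propositions~\ref{p1d}--\ref{p1dbis}, to move a derivative onto a well-placed factor and recover the missing powers of $2$. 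A secondary technical point is the bookkeeping of the logarithmic weights when one or more frequencies equals $-1$ (the $S_0$ piece), which is handled by treating $\langle\log j\rangle$ as bounded below by a positive constant and absorbing it. Apart from these, the argument is a routine (if tedious) Littlewood--Paley trilinear summation built on the two bilinear estimates already in hand, and I would not expect any genuinely new difficulty.
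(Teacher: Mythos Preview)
Your proposal has a genuine numerical gap. With $n_1\le n_2\le n_3$, your one-bilinear-plus-Bernstein estimate gives $\|u^{(1)}_{n_1}\bar u^{(2)}_{n_2}u^{(3)}_{n_3}\|_{L^1_TL^2}\lesssim T\,2^{n_1+3n_2/2}\prod_i a_{n_i}$; after weighting by the output factor $\sim 2^{n_3}$ (even granting $m\lesssim n_3$) and comparing with the $X^1_l$ weights $2^{n_1+n_2+n_3}$, you are short by $2^{n_2/2}$, which does not sum. Replacing Bernstein by the $L^2_TL^\infty$ Strichartz estimate \eqref{eq:loglogloss} improves the loss only to $n_2^2$, still not absorbable by the $(\log n_2)^{1/2}$ weight. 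The paper instead tests by duality against a fourth function $v_{j_4}$, turning the problem into a four-linear integral $I_{j_1,j_2,j_3,j_4}$, and then pairs each of the two lowest frequencies with one of the two highest, applying the bilinear estimate \emph{twice} to produce the correct weight $2^{j_1+j_2}$. This in turn forces one to confront that $v_{j_4}$ lies only in $V^2_S$: plain transference does not reach $V^2_S$, and the paper interpolates \eqref{eq:UU} against a $U^\infty_S$ bound coming from \eqref{eq:loglogloss} to obtain \eqref{eq:UV} with a logarithmic loss --- which is the actual origin of the logarithmic weights in $X^1_l$, not the low-frequency summability you cite.

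Separately, the Fourier-support heuristics you invoke (``$m\lesssim n_3$'' and ``$m\le n_3$ forces $n_2\sim n_3$'') are false for spectral projectors on a domain: products of spectrally localized functions may have output at arbitrarily high frequency. The genuinely delicate regime is therefore $j_4>j_3$, and the paper treats it not via the $Q_l/\mathcal{D}_j$ operators you propose but by integrating the Laplacian by parts off $v_{j_4}$ onto the $u$-factors (Lemma~\ref{l45}), the Dirichlet condition killing the boundary terms; for $2\le s\le 3$ one must iterate this once more.
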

Both nonlinear mappings are proved by duality: for
convenience, introduce $v\in Y^1_l$, then \eqref{eq:nonlinearX} is a
consequence of
$$
n\left|\int_0^T\int_\Omega u_1 \bar u_2 u_3 \Delta \bar v \,dxdt
\right|\lesssim T   \| u^{(1)} \|_{X^{1}_l} \|  u^{(2)} \|_{X_l^{1}} \|  u^{(3)}
\|_{X_l^{1}} \| v\|_{Y^1_l}\,,
$$
while, with $v\in Y^{2-s}$, \eqref{eq:nonlinearXpersist} is a consequence of 
$$
\left|\int_0^T\int_\Omega u \bar u u \Delta \bar v \,dxdt
\right|\lesssim T   \| u \|^2_{X^{1}_l} \|  u \|_{X^{s}} \| v\|_{Y^{2-s}}\,.
$$
Both space-time integrals may now be decomposed into dyadic pieces,
and we are left with estimating
\begin{equation}
  \label{eq:I1234}
  I_{j_1,j_2,j_3,j_4}=\int_0^T 
\int_\Omega  u^{(1)}_{j_1} \bar u^{(2)}_{j_2} u^{(3)}_{j_3} \Delta v_{j_4} \,dx dt \,
\end{equation}
where the first three factors are dyadic pieces of the corresponding $u^{(1,2,3)}$ and the
fourth one of $v$. In order to proceed, we recall the following
end-point result, which will be crucial:
\begin{proposition}[\cite{BSS-ext}]
  The solution $w_m$ to the linear Schr\"odinger equation \eqref{equd}
  satisfies
  \begin{equation}
    \label{eq:loglogloss}
    \| w_m\|_{L^2_T L^\infty(\Omega)} \lesssim \sqrt T m^2 2^m \| w_m(0)\|_2.
  \end{equation}
\end{proposition}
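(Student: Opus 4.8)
The plan is to read \eqref{eq:loglogloss} as the endpoint ($L^2_tL^\infty_x$) Strichartz estimate for the Dirichlet Laplacian on $\Omega$ applied to a single spectral block, and to reduce it to the dispersive machinery already available in the literature rather than reprove the hard input from scratch. The structure to aim for is the one familiar from $\R^3$: for a spectrally localized $w_m$, Bernstein's inequality gives $\|w_m(t)\|_{L^\infty}\lesssim 2^{m/2}\|w_m(t)\|_{L^6}$, while the Keel--Tao endpoint \cite{kt} furnishes $\|e^{it\Delta}w_m(0)\|_{L^2_tL^6_x}\lesssim \|w_m(0)\|_{\dot H^{1/2}}\lesssim 2^{m/2}\|w_m(0)\|_2$; composing the two yields the clean bound $\|w_m\|_{L^2_tL^\infty_x}\lesssim 2^m\|w_m(0)\|_2$. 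On a general compact domain this exact chain fails — the $L^2_tL^6_x$ Strichartz estimate carries losses — so the content of \eqref{eq:loglogloss} is that one can still essentially run this argument on the fixed interval $[0,1]$ at the cost of a polynomial-in-$m$ (i.e. logarithmic-in-frequency) factor $m^2$. This is precisely one of the estimates established in \cite{BSS-ext}, and we quote it.

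Granting the unit-time estimate $\|w_m\|_{L^2_{[0,1]}L^\infty}\lesssim m^2 2^m\|w_m(0)\|_2$, the $\sqrt T$ factor is then immediate: by time-translation invariance of \eqref{equd} the same estimate holds on every interval $[n,n+1]$ with $w_m(0)$ replaced by $w_m(n)$, and $\|w_m(n)\|_2=\|w_m(0)\|_2$ by $L^2$ conservation, so
\[
\|w_m\|_{L^2_{[0,T]}L^\infty}^2=\sum_{0\le n<T}\|w_m\|_{L^2_{[n,n+1]}L^\infty}^2\lesssim T\,(m^2 2^m)^2\|w_m(0)\|_2^2,
\]
which is \eqref{eq:loglogloss}. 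Hence the only genuine issue is the $[0,1]$ bound, which I would in turn treat semiclassically: rescaling $x\mapsto 2^m x$, $t\mapsto 2^{2m}t$ turns a frequency-$2^m$ solution on $[0,1]$ into a frequency-$1$ solution over time $O(2^{2m})$ on the dilated domain $2^m\Omega$, on which one has the fixed-time dispersive bound $|t|^{-3/2}$ for the rescaled Dirichlet--Schr\"odinger group on time intervals of length $O(1)$, obtained from a parametrix in the style of Melrose--Taylor for the wave group together with stationary phase; the $TT^*$/Keel--Tao argument then gives the scaling-critical $L^2_tL^6_x$ bound on such intervals, Bernstein upgrades $L^6$ to $L^\infty$ at the price $2^{m/2}$, and undoing the scaling while summing over the $O(2^m)$ subintervals of length $2^{-m}$ reconstitutes $[0,1]$ with the loss tracked to be a fixed power of $m$.

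The main obstacle — and the reason we import this from \cite{BSS-ext} rather than include the argument — is exactly this dispersive/parametrix step on a domain with no non-trapping hypothesis: handling the glancing and grazing rays, together with the boundary reflections and caustics encountered on a unit time scale, finely enough to keep the loss polynomial in $m$ (rather than a power $2^{\varepsilon m}$) is the substantive point, and it is tied to the square-function type analysis of \cite{BSS-ext}. It cannot be recovered from the soft bilinear estimate of Theorem \ref{t21} or the linear $L^4_{t,x}$ bound of Lemma \ref{lemmelineaire}, since Bernstein applied to those only produces $L^2_tL^\infty_x$ with a strictly larger power of $2^m$; nor is $L^2_tL^\infty_x$ a Keel--Tao admissible pair in dimension three, so the passage through $L^6$ via spectral localization is unavoidable.
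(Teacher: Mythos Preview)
Your reduction is essentially the paper's: quote the hard input from \cite{BSS-ext} and sum over subintervals using $L^2$ conservation. The paper's entire proof reads: ``This is nothing but a summation over time intervals of \cite{BSS-ext}, Lemma 6.1 (which is stated on an interval of size $2^{-m}$), together with conservation of mass.''

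There is, however, one gap in your write-up that matters for the application: you sum over \emph{unit} intervals, which only recovers the $\sqrt T$ factor for $T\gtrsim 1$. In this paper the $\sqrt T$ (which becomes the $T$ in \eqref{eq:nonlinearX}) is precisely what provides smallness in the contraction argument for Proposition~\ref{p31} --- see the remark after Lemma~\ref{lem.substit} --- so one needs \eqref{eq:loglogloss} for $T$ small. The fix is exactly what the paper does: quote \cite{BSS-ext} at the semiclassical scale (their Lemma 6.1, on an interval of length $2^{-m}$) and sum over $\sim T2^m$ such intervals directly, rather than first passing through the unit-time estimate. You already invoke this scale in your sketch of the $[0,1]$ bound, so the repair is simply to drop the intermediate step and sum semiclassical intervals straight to time $T$.
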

This is nothing but a summation over time intervals of \cite{BSS-ext}, Lemma 6.1
(which is stated on an interval of size $2^{-m}$), together with
conservation of mass. Notice that \eqref{eq:loglogloss} has a double
logarithmic loss in the frequency. We may now use Proposition 2.19 (i)
in \cite{KH-UV} to
obtain, for a generic $w_m$ which is frequency localized at $2^m$,
  \begin{equation}
    \label{eq:logloglossU}
    \| w_m\|_{L^2_T L^\infty(\Omega)} \lesssim \sqrt T m^2 2^m \| w_m\|_{U^2_S},
  \end{equation}
while we have the obvious ``energy'' estimate $\|w_m\|_{L^\infty_t
  L^2} \lesssim \|w_m\|_{U^q_S}$ for any $q\leq +\infty$. We may now
state our key estimate as a suitable analog of Proposition
\ref{p1dbis}.
\begin{proposition}
  Let $u_j$ and $v_k$ be frequency localized functions with $k\leq j$,
  then
  \begin{equation}
    \label{eq:UU}
\int_0^T \|u_j v_k\|^2_2 \,dt+2^{-2j} \int_0^T\|v_k \nabla u_j\|^2_2 \,dt
\lesssim T 2^{2k} \|u_j\|^2_{U^2_S}\|v_k(0)\|^2_{U^2_S}\,,
  \end{equation}
and
  \begin{equation}
    \label{eq:UV}
\int_0^T \|u_j v_k\|^2_2 \,dt+2^{-2j} \int_0^T\|v_k \nabla u_j\|^2_2 \,dt
\lesssim T 2^{2k} (\log k)^2  \|u_j\|^2_{V^2_S}\|v_k(0)\|^2_{U^2_S}\,.
  \end{equation}
Moreover,
  \begin{equation}
    \label{eq:VV}
2^{-2k-2j} \int_0^T\|u_j \Delta v_k\|^2_2 +2^{-2j} \int_0^T\|u_j \nabla v_k\|^2_2 \,dt\lesssim T 2^{ 3 k-j}  \|u_j\|^2_{V^2_S}\|v_k(0)\|^2_{V^2_S}\,.
  \end{equation}
\end{proposition}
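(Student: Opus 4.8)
The three inequalities \eqref{eq:UU}, \eqref{eq:UV}, \eqref{eq:VV} are the $U$/$V$-space analogs of Proposition \ref{p1dbis}, and the plan is to obtain them from the already-established bilinear estimate \eqref{eq:nlsdnormalter} by the transference principle of \cite{KH-UV}. Recall that a bilinear estimate of the form $\|B(w_j^{\mathrm{lin}},w_k^{\mathrm{lin}})\|\lesssim A_j B_k$ valid for \emph{linear} solutions $w_j^{\mathrm{lin}}=S(t)w_j(0)$, $w_k^{\mathrm{lin}}=S(t)w_k(0)$ automatically upgrades (Proposition 2.19 in \cite{KH-UV}, applied in each variable separately) to the same estimate with $\|w_j(0)\|_2$ replaced by $\|w_j\|_{U^2_S}$ and likewise for the $k$-factor. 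So the first step is: read \eqref{eq:nlsdnormalter} as the bilinear functional $B(u_j,v_k)=\left(\int_0^T\|u_jv_k\|_2^2\,dt+2^{-2j}\int_0^T\|v_k\nabla u_j\|_2^2\,dt\right)^{1/2}$ applied to linear solutions, with the right-hand side $\lesssim \sqrt T\,2^k\|u_j(0)\|_2\|v_k(0)\|_2$; then transfer in both variables to get \eqref{eq:UU} with $\|u_j\|^2_{U^2_S}\|v_k\|^2_{U^2_S}$ on the right. (One should note $u_j$ on the right of \eqref{eq:UU} is in $U^2_S$ but the statement writes $\|v_k(0)\|_{U^2_S}$; since $U^2_S$ functions are $L^\infty_t L^2$-bounded by their $U^2$-norm, these are interchangeable up to constants, and I would simply use the $U^2_S$-norm throughout.)

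For \eqref{eq:UV}, the point is that one of the two factors is now only in $V^2_S$ rather than $U^2_S$. The standard device (again from \cite{KH-UV}, and the reason the logarithms appear) is the interpolation-type inclusion $V^2_S\hookrightarrow U^p_S$ for every $p>2$ with norm growing like $(p-2)^{-1}$, or more precisely the frequency-localized statement that a $2^k$-localized $V^2_S$ function can be written, at scale $k$, as a sum of $\sim\log k$ pieces each controlled in $U^2_S$, losing a factor $(\log k)$. Applying the transferred bilinear estimate to each such piece and summing gives the $(\log k)^2$ loss in \eqref{eq:UV}. Concretely: decompose the $V^2_S$-factor $u_j$ into its "large-jump" atomic part plus a remainder, balance the number of atoms against $k$, apply \eqref{eq:UU} to each term, and conclude. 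The placement of the logarithm on the $k$-index (not $j$) is natural here because the $2^k$ factor on the right is the one tied to the $v_k$-localization scale that is being summed over.

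For \eqref{eq:VV}, both factors sit in $V^2_S$, and the roles are swapped: now $v_k$ carries derivatives ($\Delta v_k$ or $\nabla v_k$) while $u_j$ does not. The underlying linear bilinear input is not \eqref{eq:nlsdnormalter} directly but the companion estimate we already recorded, namely $\int_0^T\|u_j\nabla v_k\|_2^2\,dt\lesssim \|u_j(0)\|_2\mathcal H_0(u_j)^{1/2}\|v_k(0)\|_2\mathcal H_0(v_k)^{1/2}$, which on a compact domain and via Lemma \ref{lemmelineaire} reads $\lesssim T\,2^{j}2^{k}\|u_j(0)\|_2^2\|v_k(0)\|_2^2$ (using $\mathcal H_0(w_m)\lesssim T2^{2m}\|w_m(0)\|_2^2$); replacing $\nabla v_k$ by $\Delta v_k$ costs an extra $2^{2k}$. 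Multiplying out the prefactors $2^{-2k-2j}$ and $2^{-2j}$ gives the claimed $2^{3k-j}$. Then I would transfer both factors from $\|\cdot\|_2$ to $\|\cdot\|_{V^2_S}$; here the transference in a $V^2_S$-variable is \emph{lossless} when the estimate is quadratic in that variable's data and one uses the crude bound $\|w\|_{L^\infty_t L^2}\lesssim\|w\|_{V^2_S}$ together with the fact that we invoked only the $L^4_{t,x}$-type Lemma \ref{lemmelineaire}, whose transference to $V^2_S$ is again via $V^2_S\hookrightarrow U^p_S$ — but since in \eqref{eq:VV} we have room (the exponent $3k-j<2k$ is better than in the other two by a factor $2^{k-j}\le 1$), any polynomial-in-$\log$ loss from this step is absorbed, so no logarithm survives.

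The main obstacle I anticipate is bookkeeping the transference correctly in the mixed $U^2_S$/$V^2_S$ cases: one must apply Proposition 2.19 of \cite{KH-UV} one variable at a time, being careful that the \emph{other} variable's contribution is a genuine linear solution at each stage (which it is, because the bilinear functionals here are built from $L^2_{t,x}$-type norms of products, and the transference machinery is designed exactly for this), and one must track that the $(\log k)$ losses attach to the lower frequency $k$ rather than $j$ — which is forced by which index multiplies the $2^{2k}$ or $2^{3k}$ on the right-hand side and therefore gets summed in the eventual nonlinear estimate. Everything else is a routine repackaging of Proposition \ref{p1dbis} and Lemma \ref{lemmelineaire}. \cqfd
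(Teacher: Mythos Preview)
Your treatment of \eqref{eq:UU} is correct and identical to the paper's: transfer \eqref{eq:nlsdnormalter} via Proposition~2.19(i) of \cite{KH-UV}. Your treatment of \eqref{eq:VV} is also essentially correct, though the paper is more direct: it simply transfers the linear $L^4_{t,x}$ estimate \eqref{eq:L4lincomp} to get $\|(\nabla)^l w_m\|_{L^4_TL^4}\lesssim T^{1/4}2^{(1/2+l)m}\|w_m\|_{U^4_S}\lesssim T^{1/4}2^{(1/2+l)m}\|w_m\|_{V^2_S}$ using the lossless embedding $V^2_S\hookrightarrow U^4_S$, and concludes by H\"older. No logarithms ever appear, so there is nothing to ``absorb''.

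The genuine gap is in \eqref{eq:UV}. Your sketch (``decompose the $V^2_S$-factor $u_j$ into its large-jump atomic part plus a remainder, apply \eqref{eq:UU} to each term'') is incomplete: the large-jump part is in $U^2_S$ and \eqref{eq:UU} applies, but the remainder is only in $U^q_S$ for $q>2$ (or $U^\infty_S$), and \eqref{eq:UU} does \emph{not} apply to it. You need a \emph{second} bilinear estimate, valid for $u_j\in U^\infty_S$, to handle this remainder; that is exactly the content of Proposition~2.20 in \cite{KH-UV}, which requires two endpoints. The paper obtains the missing $U^\infty_S$ endpoint from the Blair--Smith--Sogge Strichartz estimate \eqref{eq:loglogloss}, $\|w_m\|_{L^2_TL^\infty}\lesssim \sqrt T\,m^2 2^m\|w_m(0)\|_2$: applying it to $v_k$ and the trivial energy bound to $u_j$ gives
\[
\int_0^T\|u_jv_k\|_2^2\,dt+2^{-2j}\int_0^T\|v_k\nabla u_j\|_2^2\,dt\lesssim T\,2^{2k}k^4\|u_j\|_{U^\infty_S}^2\|v_k\|_{U^2_S}^2.
\]
Interpolating this (constant $\sim k^2$) with \eqref{eq:UU} (constant $\sim 1$) via Proposition~2.20 yields the $V^2_S$ bound with loss $\log(k^2)\sim\log k$. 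This also explains why the logarithm is in $k$ and not $j$: it comes from the $k^2$ Strichartz loss on the \emph{low}-frequency factor $v_k$, not from any summation consideration. Without invoking \eqref{eq:loglogloss} your argument for \eqref{eq:UV} does not close.
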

\begin{proof}
  The first estimate \eqref{eq:UU} is a direct consequence of
  \eqref{eq:nlsdnormalter} and, again, Proposition 2.19 (i) in \cite{KH-UV},
  which is the transference principle we alluded to.
The third estimate \eqref{eq:VV} is a consequence of a linear estimate: again by Proposition
2.19 in \cite{KH-UV}, we have, as a consequence of
\eqref{eq:L4lincomp}, that for any $w_m$ localized at $2^m$, $l\in \mathbb{N}$,
\begin{equation}
  \label{eq:L4lincompV}
  \|(\nabla)^l w_{m}\|_{L^4_T L^4} \lesssim T^\frac 1 4
  2^{(\frac 1 2+l)m} \|w_m\|_{U^4_S}\lesssim  T^\frac 1 4
  2^{(\frac 1 2+l)m} \|w_m\|_{V^2_S},
\end{equation}
using $U^4_S\subset V^2_S$. Hence \eqref{eq:VV} follows by
H\"older.   The
  second (key !) estimate \eqref{eq:UV} is proven in two steps: first, combining by H\"older
  \eqref{eq:logloglossU} on $v_k$ and the trivial ``energy'' estimate
  on $u_j$, we get a
  new bilinear estimate,
  \begin{equation}
    \label{eq:Ulog}
2^{-2j} \int_0^T\|v_k \nabla u_j\|^2_2 \,dt+\int_0^T \|u_j v_k\|^2_2 \,dt
\lesssim T 2^{2k} k^4 \|u_j\|^2_{U^{\infty}_S}\|v_k(0)\|^2_{U^2_S}\,.
  \end{equation}
We may now apply Proposition 2.20 from \cite{KH-UV} to the operator $\mathcal{T}$
mapping $u_j $ to $u_j v_k$ : by \eqref{eq:Ulog} it maps $U^\infty_S$
to $L^2_T L^2$ with constant $ C_1 \sqrt T 2^{k} k^2
\|v_k(0)\|_{U^2_S}$, while by \eqref{eq:UU} it maps  $U^2_S$
to $L^2_T L^2$ with constant $ C_2 \sqrt T 2^{k}
\|v_k(0)\|_{U^2_S}$. As a result, $\mathcal{T}$ maps $V^2_S$ to $L^2_T
L^2$ with constant $ C_3 \sqrt T 2^{k} \log k \|v_k(0)\|_{U^2_S}$ which is the
desired result. The estimate for $v_k \nabla u_j$ follows similarly.
\end{proof}
We remark that the right-hand side of \eqref{eq:VV} may
obviously be replaced by the right-hand side of \eqref{eq:UV}, as
$U^2_S\subset V^2_S$. This will be useful in unifying the treatment of
nonlinear terms later. 
\begin{lemme}\label{l44}
  Let $j_1,j_2,j_4 \leq j_3$, we have
  \begin{equation}
    \label{eq:dyadicmultilinear}
    \sum_{j_1,j_2} |I_{j_1,j_2,j_3,j_4}| \lesssim T
    \|u^{(1)}\|_{X^{1}_l} \| u^{(2)}\|_{X^{1}_l}
    \|u^{(3)}_{j_3}\|_{U^2_S} \|\Delta v_{j_4}\|_{V^2_S}\,.
  \end{equation}
\end{lemme}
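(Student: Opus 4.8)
The plan is to estimate the space-time quadrilinear integral $I_{j_1,j_2,j_3,j_4}$ by pairing the factors so that each pair is controlled by one of the bilinear estimates \eqref{eq:UU}--\eqref{eq:VV}, then sum the resulting geometric series in $j_1,j_2$. Concretely, since $j_1,j_2,j_4\leq j_3$, we write by Cauchy--Schwarz in space-time
\begin{equation*}
|I_{j_1,j_2,j_3,j_4}|\leq \left(\int_0^T\|u^{(1)}_{j_1}\, u^{(3)}_{j_3}\|_2^2\,dt\right)^{1/2}\left(\int_0^T\|\bar u^{(2)}_{j_2}\,\Delta v_{j_4}\|_2^2\,dt\right)^{1/2}.
\end{equation*}
For the first factor I would apply \eqref{eq:UV} with the low frequency $k=\min(j_1,j_3)$ and the high one $j=\max(j_1,j_3)$: since $j_1\leq j_3$ this is $2^{j_1}(\log j_1)\sqrt T\,\|u^{(3)}_{j_3}\|_{U^2_S}\|u^{(1)}_{j_1}\|_{V^2_S}$ (using $U^2_S\subset V^2_S$ on the $u^{(3)}$ slot when needed, or directly the $U^2_S$ norm). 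For the second factor I would apply \eqref{eq:VV}, again with $j_4\leq j_2$ meaning $v_{j_4}$ carries the gradients while $u^{(2)}_{j_2}$ is the high frequency: this gives $2^{(3j_4-j_2)/2}\sqrt T\,\|u^{(2)}_{j_2}\|_{V^2_S}\|v_{j_4}\|_{V^2_S}\cdot 2^{j_2}2^{j_4}$ after restoring the $2^{-2j_4-2j_2}$ and $2^{-2j_2}$ prefactors in \eqref{eq:VV}, i.e.\ schematically a bound of the form $\sqrt T\,2^{j_4/2}2^{j_2/2}\|u^{(2)}_{j_2}\|_{V^2_S}\|v_{j_4}\|_{V^2_S}$ — one must bookkeep the exponents carefully here, and the remark after \eqref{eq:VV} lets one replace its right-hand side by that of \eqref{eq:UV} to keep $U/V$ slots consistent.

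Multiplying the two bounds, the total power of $T$ is $T$, as required, and one collects a frequency weight of the schematic shape $2^{j_1/2}(\log j_1)^{1/2}\cdot 2^{j_4/2}2^{j_2/2}$ times $\|u^{(3)}_{j_3}\|_{U^2_S}\|\Delta v_{j_4}\|_{V^2_S}$ (the $\Delta$ on $v_{j_4}$ absorbing one factor $2^{2j_4}$, which is why only $2^{j_4/2}$ and not more survives on the $v$ side — this is exactly the point of writing \eqref{eq:dyadicmultilinear} with $\|\Delta v_{j_4}\|_{V^2_S}$ rather than $\|v_{j_4}\|$). The key is that the surviving exponents of $j_1$ and $j_2$ are \emph{strictly positive} after we compare against the norms $\|u^{(1)}\|_{X^1_l}=\sum_{j_1}2^{j_1}(\log j_1)^{1/2}\|u^{(1)}_{j_1}\|_{U^2_S}$ and $\|u^{(2)}\|_{X^1_l}=\sum_{j_2}2^{j_2}(\log j_2)^{1/2}\|u^{(2)}_{j_2}\|_{U^2_S}$: indeed each $X^1_l$ norm carries a full power $2^{j_i}$ while the bilinear estimates only cost $2^{j_i/2}$ on those slots, so the ratio $2^{-j_i/2}$ (up to the logarithmic factors, which are handled by $U^2_S\subset V^2_S$ at the cost of a $\log$ already present in \eqref{eq:UV}) is summable. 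Summing the two independent geometric series in $j_1$ and $j_2$ produces the constant, times $T\,\|u^{(1)}\|_{X^1_l}\|u^{(2)}\|_{X^1_l}\|u^{(3)}_{j_3}\|_{U^2_S}\|\Delta v_{j_4}\|_{V^2_S}$.

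The main obstacle is the careful matching of the $U^p_S$ versus $V^p_S$ norms across the four slots: the bilinear estimates \eqref{eq:UV} and \eqref{eq:VV} are not symmetric in which factor sits in $U^2_S$ and which in $V^2_S$, and the output of Lemma \ref{l44} insists on $U^2_S$ for $u^{(3)}_{j_3}$ but $V^2_S$ for $\Delta v_{j_4}$. One must therefore choose the Cauchy--Schwarz pairing so that $u^{(3)}_{j_3}$ lands in a $U^2_S$ slot of \eqref{eq:UV} and $v_{j_4}$ in a $V^2_S$ slot of \eqref{eq:VV}; the remaining two slots (the $X^1_l$ factors $u^{(1)},u^{(2)}$) can always be taken in $V^2_S$ since $U^2_S\subset V^2_S$ makes the $X^1_l$ norm dominate. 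A secondary bookkeeping point is tracking the logarithmic losses: \eqref{eq:UV} contributes $(\log k)^2$, but since the $X^1_l$ norm already weights by $(\log j)^{1/2}$ and we are summing a geometric series with ratio $\lesssim 2^{-j_i/2}$, these logarithms are harmlessly absorbed. Once the pairing and the exponent arithmetic are pinned down, the summation is routine.
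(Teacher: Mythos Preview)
Your overall strategy (Cauchy--Schwarz to split $I_{j_1,j_2,j_3,j_4}$ into two bilinear pieces, apply the bilinear estimates, then sum) is the right one and matches the paper. However, two concrete points go wrong in the execution.

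\textbf{First, the main case is $j_1,j_2\leq j_4$, not $j_4\leq j_2$.} Your application of \eqref{eq:VV} to $(u^{(2)}_{j_2},\Delta v_{j_4})$ assumes $j_4\leq j_2$, but the difficult regime is the opposite one, where both $j_1,j_2$ are the lowest frequencies. In that regime \eqref{eq:VV} does not apply to this pair (the derivatives in \eqref{eq:VV} fall on the \emph{low} frequency); one must treat $\Delta v_{j_4}$ simply as a $V^2_S$ function localized at $2^{j_4}$ and use \eqref{eq:UV}. Your use of \eqref{eq:VV} is indeed what the paper does, but only in the \emph{easier} sub-case where $j_4$ is below $j_1$ or $j_2$.

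\textbf{Second, there is no geometric gain $2^{-j_i/2}$, and the pairing is not arbitrary.} In the hard case $j_1,j_2\leq j_4\leq j_3$, estimates \eqref{eq:UU}--\eqref{eq:UV} each produce a factor $2^{k}$ with $k$ the \emph{low} frequency of the pair. Pairing each of $u^{(1)}_{j_1},u^{(2)}_{j_2}$ with one of the two high-frequency factors yields exactly $2^{j_1}2^{j_2}$ (times $\log$), with \emph{no} leftover decay: summability comes solely from the $\ell^1$ structure built into $X^1_l$. This also means the choice of which low-frequency $u$ goes with $\Delta v_{j_4}$ matters. The paper takes (without loss of generality) $j_1\leq j_2$ and pairs $u^{(1)}_{j_1}$ with $\Delta v_{j_4}$ via \eqref{eq:UV} and $u^{(2)}_{j_2}$ with $u^{(3)}_{j_3}$ via \eqref{eq:UU}, giving
\[
|I_{j_1,j_2,j_3,j_4}|\lesssim T\,(\log j_1)\,2^{j_1}\|u^{(1)}_{j_1}\|_{U^2_S}\,2^{j_2}\|u^{(2)}_{j_2}\|_{U^2_S}\,\|u^{(3)}_{j_3}\|_{U^2_S}\,\|\Delta v_{j_4}\|_{V^2_S}.
\]
Since $j_1\leq j_2$, one has $\log j_1\leq(\log j_1)^{1/2}(\log j_2)^{1/2}$, which is precisely what allows the sum over $j_1,j_2$ to reproduce $\|u^{(1)}\|_{X^1_l}\|u^{(2)}\|_{X^1_l}$. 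Your pairing $(u^{(1)}_{j_1},u^{(3)}_{j_3})$ and $(u^{(2)}_{j_2},\Delta v_{j_4})$ would produce $(\log j_2)$ instead, and since $j_2\geq j_1$ this cannot be split the same way; you would end up needing $\sum_{j_2}2^{j_2}(\log j_2)\|u^{(2)}_{j_2}\|_{U^2_S}$, which is a strictly stronger norm than $X^1_l$.

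A minor additional point: in \eqref{eq:UV} it is the \emph{high}-frequency factor that sits in $V^2_S$ and the low-frequency one in $U^2_S$, the reverse of what you write. This is harmless for $u^{(3)}_{j_3}$ (one may always bound $\|\cdot\|_{V^2_S}\lesssim\|\cdot\|_{U^2_S}$), but it is another reason why \eqref{eq:UV} should be applied to the pair containing $\Delta v_{j_4}$, which is exactly the factor we need in $V^2_S$.
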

\begin{proof}
One should think that the most difficult case is when $j_1,j_2\leq
j_4$: Estimate \eqref{eq:dyadicmultilinear} then follows from
\eqref{eq:UU} and \eqref{eq:UV} after pairing both
low frequency factors with the higher frequencies factors, applying
Cauchy-Schwarz, summing over $j_1,j_2$ and using the definition
of $X^{1}_l$: without loss of generality, let $j_1\leq j_2$,
\begin{align*}
   |I_{j_1,j_2,j_3,j_4}| & \lesssim \|u^{(2)}_{j_2} u^{(3)}_{j_3}\|_{L^2_T L^2}
   \|u^{(1)}_{j_1} \Delta v_{j_4}\|_{L^2_T L^2}\\
 & \lesssim T \log j_1 2^{j_1} \|u^{(1)}_{j_1} \|_{U^2_S}  2^{j_2} \|
   u^{(2)}_{j_2} \|_{U^2_S } \|u^{(3)}_{j_3}\|_{U^2_S}
   \|\Delta v_{j_4}\|_{V^2_S}\\
\sum_{j_1\leq j_2}   |I_{j_1,j_2,j_3,j_4}| & \lesssim \sum_{j_1\leq
  j_2} T (\log j_1)^\frac 1 2 2^{j_1} \|u^{(1)}_{j_1} \|_{U^2_S}
(\log j_2)^\frac 1 2 2^{j_2} \|
   u^{(2)}_{j_2} \|_{U^2_S } \|u^{(3)}_{j_3}\|_{U^2_S}
   \|\Delta v_{j_4}\|_{V^2_S}
\end{align*}
which implies the desired result. When $j_4$ is less than $j_1$ and/or
$j_2$, we simply use \eqref{eq:VV} on the $v$ factor and one of the $u$'s.
\end{proof}

Assume that $\epsilon=0,1/2$, $s>0$,$1\leq p$ together with
$$
2^{s j_3} (\log j_3)^\epsilon \|u^{(3)}_{j_3}\|_{U^2_S}=\alpha_{j_3} \in l^p
\,\,\text{ and }\,\,2^{(2-s)j_4} (\log j_4)^{-\epsilon}
\|v_{j_4}\|_{U^2_S}=\beta_{j_4} \in l^{p'}
$$
then we may sum over $j_3\leq j_4$ (using Young for discrete
sequences) to get
$$
\sum_{j_1\leq j_2\leq j_4\leq j_3} |I_{j_1,j_2,j_3,j_4}| \lesssim 
    \|u^{(1)}\|_{X^{1}_l} \| u^{(2)}\|_{X^{1}_l}
    \|(\alpha_{j_3})_{j_3}\|_{l^p} \|(\beta_{j_4})_{j_4}\|_{l^{p'}}.
$$
Both nonlinear mappings from Proposition \ref{p41} follow for such a
restricted set of indices $\{j_1,j_2,j_4\leq j_3\}$, choosing $s=1$, $p=1$,
$\epsilon=1/2$ and then $1<s\leq 3$, $p=2$, $\epsilon=0$. 

We now deal with the most difficult situation, namely $j_1,j_2,j_3\leq
j_4$. Without loss of generality, we may suppose $j_1\leq j_2\leq j_3$.  Integrating by parts the full Laplacian,
we get terms like
\begin{equation}
  \label{eq:J1234}
  J_{j_1,j_2,j_3,j_4}=\int_0^T 
\int_\Omega  u^{(1)}_{j_1} \bar u^{(2)}_{j_2} \Delta u^{(3)}_{j_3}
v_{j_4} \,dx dt \,\text{ and } \,   K_{j_1,j_2,j_3,j_4}=\int_0^T 
\int_\Omega  u^{(1)}_{j_1} \nabla \bar u^{(2)}_{j_2} \cdot \nabla  u^{(3)}_{j_3}
v_{j_4} \,dx dt
\end{equation}
where successive boundary terms vanish due to the Dirichlet boundary
condition and where we kept only the most difficult terms (derivatives
fall on the highest frequencies).
\begin{lemme}\label{l45}
  Let $j_1,j_2\leq j_3 \leq j_4$, we have
  \begin{equation}
    \label{eq:dyadicmultilinearJ}
    \sum_{j_1,j_2} |J_{j_1,j_2,j_3,j_4}| \lesssim T
    \|u^{(1)}\|_{X^{1}_l} \| u^{(2)}\|_{X^{1}_l}
    \|\Delta u^{(3)}_{j_3}\|_{U^2_S} \|v_{j_4}\|_{V^2_S}\,,
  \end{equation}
while
  \begin{equation}
    \label{eq:dyadicmultilinearK}
    \sum_{j_1,j_2\leq j_3} |K_{j_1,j_2,j_3,j_4}| \lesssim T
    \|u^{(1)}\|_{X^{1}_l} \| u^{(2)}\|_{X^{1}_l}
    2^{\frac 3 2 j_3} \|\Delta u^{(3)}_{j_3}\|_{U^2_S} 2^{\frac 1 2 j_4} \|v_{j_4}\|_{V^2_S}\,.
  \end{equation}
\end{lemme}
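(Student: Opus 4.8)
The plan is to treat both $J$ and $K$ by the same mechanism: apply Cauchy--Schwarz to the space-time integral so as to split the four-fold product into a product of two $L^2_TL^2$-norms of bilinear products, estimate each of these by one of the bilinear inequalities \eqref{eq:UU}, \eqref{eq:UV}, \eqref{eq:VV}, and then sum the resulting dyadic series in $j_1,j_2$. The one rigid constraint governing the choice of pairing is that the highest-frequency factor $v_{j_4}$ must be placed inside an estimate whose right-hand side controls it in $V^2_S$ (and not merely $U^2_S$), because $v$ is tested against $Y^1_l$, a $V$-based space. Throughout one uses that $\Delta u^{(3)}_{j_3}$ is again frequency-localized at $2^{j_3}$, the inclusion $U^2_S\hookrightarrow V^2_S$, the elementary multiplier bound $\|u^{(3)}_{j_3}\|_{U^2_S}\lesssim\|\Delta u^{(3)}_{j_3}\|_{U^2_S}$ (valid since $j_3\geq0$), and the standing ordering $j_1\leq j_2\leq j_3\leq j_4$.

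For \eqref{eq:dyadicmultilinearJ} I would pair $u^{(1)}_{j_1}$ with $v_{j_4}$ and $\bar u^{(2)}_{j_2}$ with $\Delta u^{(3)}_{j_3}$. Estimate \eqref{eq:UV} applied to the first pair, with $v_{j_4}$ in the role of the high-frequency factor, gives $\|u^{(1)}_{j_1}v_{j_4}\|_{L^2_TL^2}\lesssim\sqrt T\,2^{j_1}(\log j_1)\,\|v_{j_4}\|_{V^2_S}\|u^{(1)}_{j_1}\|_{U^2_S}$, while \eqref{eq:UU} applied to the second pair, with $\Delta u^{(3)}_{j_3}$ in the role of $u_j$, gives $\|\bar u^{(2)}_{j_2}\Delta u^{(3)}_{j_3}\|_{L^2_TL^2}\lesssim\sqrt T\,2^{j_2}\,\|\Delta u^{(3)}_{j_3}\|_{U^2_S}\|u^{(2)}_{j_2}\|_{U^2_S}$. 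Multiplying and writing $\log j_1=(\log j_1)^{1/2}(\log j_1)^{1/2}\leq(\log j_1)^{1/2}(\log j_2)^{1/2}$ (here one uses $j_1\leq j_2$) splits the logarithmic loss evenly between the two $X^1_l$ factors; summing over $j_1\leq j_2$ by $\sum_{j_1\leq j_2}a_{j_1}b_{j_2}\leq(\sum_{j_1}a_{j_1})(\sum_{j_2}b_{j_2})$ and recalling $\|u\|_{X^1_l}=\sum_{j\geq0}2^{j}(\log j)^{1/2}\|\Delta_j u\|_{U^2_S}$ yields exactly \eqref{eq:dyadicmultilinearJ}.

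For \eqref{eq:dyadicmultilinearK} the presence of \emph{two} gradients dictates a different pairing: $u^{(1)}_{j_1}$ with $\nabla u^{(2)}_{j_2}$, and $\nabla u^{(3)}_{j_3}$ with $v_{j_4}$. In the first pair the derivative lands on the higher of the two paired frequencies, so the lossless gradient form of \eqref{eq:UU} applies and gives $\|u^{(1)}_{j_1}\nabla u^{(2)}_{j_2}\|_{L^2_TL^2}\lesssim\sqrt T\,2^{j_1+j_2}\,\|u^{(1)}_{j_1}\|_{U^2_S}\|u^{(2)}_{j_2}\|_{U^2_S}$. In the second pair, however, the derivative falls on the \emph{lower} frequency $j_3$, so one is forced to use \eqref{eq:VV} rather than the gradient form of \eqref{eq:UU}; this produces $\|v_{j_4}\nabla u^{(3)}_{j_3}\|_{L^2_TL^2}\lesssim\sqrt T\,2^{(3j_3+j_4)/2}\,\|v_{j_4}\|_{V^2_S}\|u^{(3)}_{j_3}\|_{V^2_S}$, after which $\|u^{(3)}_{j_3}\|_{V^2_S}\lesssim\|u^{(3)}_{j_3}\|_{U^2_S}\lesssim\|\Delta u^{(3)}_{j_3}\|_{U^2_S}$. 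Multiplying the two bounds and summing over $j_1,j_2\leq j_3$ against $\|u^{(1)}\|_{X^1_l}\|u^{(2)}\|_{X^1_l}$ gives \eqref{eq:dyadicmultilinearK}, the extra $2^{3j_3/2}$ being precisely the price of invoking \eqref{eq:VV}.

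The main obstacle is $K$, and more specifically the asymmetry of the available bilinear estimates: a single bilinear estimate can absorb at most one derivative, and only the gradient forms of \eqref{eq:UU}/\eqref{eq:UV}, where the derivative sits on the \emph{higher} of the two paired frequencies, are lossless, whereas putting the derivative on the lower frequency costs an extra factor $2^{k}$ through \eqref{eq:VV}. Since in $K$ the two derivatives sit on the two intermediate frequencies $u^{(2)}_{j_2}$ and $u^{(3)}_{j_3}$, and once $\nabla u^{(2)}_{j_2}$ has been consumed losslessly against $u^{(1)}_{j_1}$ the remaining $\nabla u^{(3)}_{j_3}$ has no lossless partner left — it must be paired with the still higher $v_{j_4}$ — one cannot avoid the loss; this is what forces the $2^{3j_3/2}2^{j_4/2}$ on the right of \eqref{eq:dyadicmultilinearK}. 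The remaining care is purely bookkeeping: choosing the pairings so that $v_{j_4}$ always ends up measured in $V^2_S$ (hence is paired via \eqref{eq:UV} or \eqref{eq:VV}), and, in the $J$-term, splitting the logarithm from \eqref{eq:UV} symmetrically so it can be swallowed by the two $X^1_l$ norms.
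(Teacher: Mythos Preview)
Your argument is correct. For the $J$ estimate you reproduce the paper's proof verbatim: pair $u^{(1)}_{j_1}$ with $v_{j_4}$ via \eqref{eq:UV} and $u^{(2)}_{j_2}$ with $\Delta u^{(3)}_{j_3}$ via \eqref{eq:UU}, split the $\log$ as $(\log j_1)^{1/2}(\log j_2)^{1/2}$, and sum.

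For the $K$ estimate your route differs slightly from the paper's. You pair $(u^{(1)}_{j_1},\nabla u^{(2)}_{j_2})$ via the gradient form of \eqref{eq:UU} and $(\nabla u^{(3)}_{j_3},v_{j_4})$ via \eqref{eq:VV}. The paper instead pairs $(u^{(1)}_{j_1},\nabla u^{(3)}_{j_3})$ via the gradient form of \eqref{eq:UU} and handles the two remaining factors $\nabla u^{(2)}_{j_2}$ and $v_{j_4}$ separately by the linear $L^4_{t,x}$ estimate \eqref{eq:L4lincompV}. Since \eqref{eq:VV} is itself derived from \eqref{eq:L4lincompV} by H\"older, the two arguments are equivalent at the level of ingredients; the dyadic factors one obtains before summation differ ($2^{j_1}2^{3j_2/2}2^{j_3}2^{j_4/2}$ in the paper versus $2^{j_1}2^{j_2}2^{3j_3/2}2^{j_4/2}$ in yours), but after summing $j_1,j_2\leq j_3$ both collapse to the same $2^{3j_3/2}2^{j_4/2}$. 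Neither variant invokes \eqref{eq:UV} for $K$, so the $\log$ in $X^1_l$ is not actually needed there, which the paper also remarks.
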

\begin{proof}
 The $J_{j_1,j_2,j_3,j_4}$ integral
may be dealt with like we did for $I_{j_1,j_2,j_3,j_4}$: indeed,
\begin{align*}
   |J_{j_1,j_2,j_3,j_4}| & \lesssim \|u^{(2)}_{j_2} \Delta u^{(3)}_{j_3}\|_{L^2_T L^2}
   \|u^{(1)}_{j_1} v_{j_4}\|_{L^2_T L^2}\\
\sum_{j_1\leq j_2}   |I_{j_1,j_2,j_3,j_4}| & \lesssim \sum_{j_1\leq
  j_2} T (\log j_1)^\frac 1 2 2^{j_1} \|u^{(1)}_{j_1} \|_{U^2_S}
(\log j_2)^\frac 1 2 2^{j_2} \|
   u^{(2)}_{j_2} \|_{U^2_S } \|\Delta u^{(3)}_{j_3}\|_{U^2_S}
   \| v_{j_4}\|_{V^2_S}\\
\sum_{j_1\leq j_2} |J_{j_1,j_2,j_3,j_4}|&  \lesssim 
    \|u^{(1)}\|_{X^{1}_l} \| u^{(2)}\|_{X^{1}_l} \|\Delta u^{(3)}_{j_3}\|_{U^2_S}
   \| v_{j_4}\|_{V^2_S}
\end{align*}
The $K_{j_1,j_2,j_3,j_4}$ is
interesting in that it does not require the use of \eqref{eq:UV} but
only \eqref{eq:UU} and the linear $L^4_{tx}$ estimate \eqref{eq:L4lincompV}:
\begin{align*}
   |K_{j_1,j_2,j_3,j_4}| & \lesssim \|u^{(1)}_{j_1} \nabla u^{(3)}_{j_3}\|_{L^2_T L^2}
   \|\nabla u^{(2)}_{j_2}\|_{L^4_T L^4} \| v_{j_4}\|_{L^4_T L^4}\\
\sum_{j_1\leq j_2}   |I_{j_1,j_2,j_3,j_4}| & \lesssim \sum_{j_1\leq
  j_2} T 2^{j_1} \|u^{(1)}_{j_1} \|_{U^2_S}
 2^{\frac 3 2 j_2} \|
   u^{(2)}_{j_2} \|_{U^4_S } 2^{j_3} \|u^{(3)}_{j_3}\|_{U^2_S}
  2^{\frac 1 2 j_4} \| v_{j_4}\|_{V^2_S}\\
\sum_{j_1\leq j_2\leq j_3} |J_{j_1,j_2,j_3,j_4}|&  \lesssim 
    \|u^{(1)}\|_{X^{1}_l} \| u^{(2)}\|_{X^{1}_l}   2^{\frac 3 2j_3} \|\Delta u^{(3)}_{j_3}\|_{U^2_S}
  2^{\frac 1 2 j_4} \| v_{j_4}\|_{V^2_S}
\end{align*}
where we notice that the $\log$ factor in $X^1_l$ could be dispensed with.
\end{proof}
Now, we may again conclude as we did before for both remaining sums
over $j_3\leq j_4$, in order to achieve the proof of Proposition
\ref{p41}, for $1<s<2$.

However, for $2\leq s\leq 3$, we need to integrate by parts again: in
fact, $v\in Y^{2-s}$ is still at zero or negative regularity, so we set $v=\Delta w$ and
we consider
\begin{equation}
  \label{eq:K1234}
  \tilde J_{j_1,j_2,j_3,j_4}=\int_0^T 
\int_\Omega  u^{(1)}_{j_1} \bar u^{(2)}_{j_2} \Delta u^{(3)}_{j_3}
\Delta w_{j_4} \,dx dt \,\text{ and } \,   \tilde K_{j_1,j_2,j_3,j_4}=\int_0^T 
\int_\Omega  u^{(1)}_{j_1} \nabla \bar u^{(2)}_{j_2} \cdot \nabla  u^{(3)}_{j_3}
\Delta w_{j_4} \,dx dt
\end{equation}
where, again, we kept what is, at this stage, the worst distribution
of derivatives. The $\tilde J_{j_1,j_2,j_3,j_4}$ integral is nothing
but our original $I_{j_1,j_2,j_3,j_4}$ is disguise, with $\Delta
u_{j_3}$ playing the part of $u_{j_3}$ and $w$ the part of $v$; we
leave the details to the reader. The $\tilde K_{j_1,j_2,j_3,j_4}$
integral requires more care: first, remark that we may integrate by
parts twice, as the successive boundary terms will vanish due to
$u_{j_1|\partial \Omega}=0$ and then $w_{\partial\Omega}=0$. If the
Laplacian hits any individual $u_{j}$ factor, we are back with an
integral like our previous $K_{j_1,j_2,j_3,j_4}$. Hence the last
remaining case to deal with corresponds to
$$
 L_{j_1,j_2,j_3,j_4}=\int_0^T 
\int_\Omega  u^{(1)}_{j_1} \nabla^2 \bar u^{(2)}_{j_2} \nabla^2  u^{(3)}_{j_3}
 w_{j_4} \,dx dt
$$
for which we use \eqref{eq:UV} on the $u_{j_1} w_{j_4}$ factor and the
linear estimate \eqref{eq:L4lincompV} for the remaining two factors.  \cqfd
\begin{rem}
One should emphasize that
  once a global solution is obtained for $1<s<\frac 3 2$ (so that
  $H^s_0=H^s(\Omega)\cap H^1_0$), using the available Strichartz
  estimates one could propagate classical $H^\sigma(\Omega)$
  regularity for $\sigma>s$ by using the equation and energy
  estimates. The main point of the previous analysis in $X^{s}$ spaces is to illustrate that one may perform more than one
  integration by parts in the analysis of the worst case scenario
  $j_4>j_3$ (which does not exist in $\R^3$ by orthogonality due to
  disjoint Fourier supports). In fact, one could perform one last
  integration by parts (with just one gradient) in the worst part of $\tilde
  J_{j_1,j_2,j_3,j_4}$ to reach $s<4$, taking advantage of
  $\partial^2_n u_{|\partial\Omega}=0$ to dispense with the
  corresponding boundary term.
\end{rem}
Let us denote by $\tn u T =\| \mathbf{1}_{(-T,T)} u \|_{X^{1}_l}$. Proposition
\ref{p31} is a simple consequence of Banach fixed-point theorem in a
suitable ball of the space endowed with norm $\tn \cdot T$. From now on, all numerical
constants are written explicitly and numbered. We have
$$
\tn u T\leq \tn {S(t)u(0)} T + C_1 T \tn u T^3\leq C_0
\|u(0)\|_{B^{1,1}_{2,l}} + C_1 T \tn u T^3\,.
$$ 
And we may solve for 
\begin{equation}
  \label{eq:time}
  \frac{ C_2} {2\|u(0)\|_{B^{1,1}_{2_l}}^{ 2}} \leq T<
\frac{C_2}{\|u(0)\|_{B^{1,1}_{2,l}}^{ 2}}\,.
\end{equation}
We now proceed with Proposition \ref{p32}: we only need to prove an estimate on the time of existence, persistence in $X^{s}$
being a direct consequence of \eqref{eq:nonlinearXpersist}, which we
may rewrite, with $\| u\|_{s,T}=\|\mathbf{1}_{(-T,T)} u\|_{X^{s}}$,
$$
\| u\|_{s,T}\leq C_3 \|u(0)\|_{H^s}+C_4 T \tn u T^2 \|u\|_{s,T}.
$$
As such, adjusting the $C_2$ constant if necessary, 
$$
\|u(T)\|_{H^s} \leq  \| u\|_{s, T} \leq 2 C_3 \|u(0)\|_{H^s},
$$
provided $T$ satisfies \eqref{eq:time}. We now proceed with a
logarithmic Sobolev inequality: for any $f\in B^{1,1}_{2,l}$, for $s=1+\eta $, we have, for $J$ large,
\begin{align*}
  \|f\|_{B^{1,1}_{2,l}} & \leq \|S_0 f\|_2+\sum_{j\geq 0}^J (\log j)^\frac 1 2 2^j \|\Delta_j
  f\|_2+\sum_{j\geq J}(\log j)^\frac 1 2 2^j \|\Delta_j f\|_2\\
   & \leq \|S_0 f\|_2+  (J \log J)^\frac 1 2 \bigl( \sum_{j\geq 0}^J 2^{2j}
     \|\Delta_j f\|^2_2\bigr)^\frac 1 2+\sum_{j\geq J} (\log j)^\frac 1 2 2^{-\eta j}
   2^{s j} \|\Delta_j f\|_2\\
& \leq \|f\|_2+ (J\log J)^\frac 1 2 \| f\|_{H^1}+ 2^{-\frac 3 4 \eta  J} \|f\|_{H^{s}}\\
& \leq \|f\|_2+  (J\log J)^\frac 1 2 \bigl( \| f\|_{H^1}+ 2^{- \frac \eta  2 J} \|f\|_{H^{s}}\bigr)
\end{align*}
and the $(\cdots)$ is optimized in $J$ for $J=2/\eta \log(\eta
\|f\|_{H^{s}}/(2\|f\|_{H^1}))$. As such, we get
\begin{equation}
  \label{eq:logSobolev}
  \|f\|_{B^{1,1}_2}\leq \|f\|_{H^1}(C_5+C_6(\log \|f\|_{H^{s}}\log
  \log \|f\|_{H^{s}})^\frac 1 2)\,.
\end{equation}
Using this inequality, for $H^s$ data, \eqref{eq:time} yields a new
time of existence,
\begin{equation}
  \label{eq:timelog}
T\sim \frac {C_7} {\|u(0)\|^2_{H^1} \log \|u(0)\|_{H^{s}}\log \log \|u(0)\|_{H^{s}}}\,,
\end{equation}
and the proof of Proposition \ref{p32} is complete.\cqfd

Finally, we prove Theorem \ref{thcubic} by iterating the local in time
construction from Proposition \ref{p32}. In the defocusing case of \eqref{nls}, the Hamiltonian $E(u)$ is conserved
and controls the $H^1$ norm. Set $T_1=T$, and repeat the local in time
construction from the data $u(T_1)$, and reach $T_1+T_2$, with
$$
T_2\sim \frac {C_7} {E \log \|u(T_1)\|_{H^s}\log \log \|u(T_1)\|_{H^s}}\sim \frac
{C_7} {E \log (2C_3 \|u(0)\|_{H^s})\log \log (2C_3 \|u(0)\|_{H^s})}\,.
$$
after $n$ steps, we reach $T_1+T_2+\cdots+T_n$, with
$$
T_n\sim \frac {C_7} {E \log \|u(T_{n-1})\|_{H^s}\log\log  \|u(T_{n-1})\|_{H^s}}\sim \frac
{C_7} {E \log ((2C_3)^{n-1} \|u(0)\|_{H^s})\log \log ((2C_3)^{n-1} \|u(0)\|_{H^s})}
$$
which yields for $n$ large enough (depending on $C_3$ and $\|u(0)\|_{H^s}$),
$$
T_n \geq \frac {C_8} {n \log n}\,.
$$
Therefore the series $\sum_n T_n$ diverges and we may reach an
arbitrarily large time. Moreover,
$$
\|u(T_n)\|_{H^s}\leq (2C_3)^{n-1} \|u(0)\|_{H^s}\sim C_9 ( \exp \exp \exp
T_n) \|u(0)\|_{H^s}.
$$
Hence, we proved global existence in the defocusing case for
\eqref{nls}. The focusing case is handled similarly, with the
additional restriction on the mass $\|u(0)\|^2_2$, which is a
conserved quantity, and is required to be small so that
$\|u\|_{H^1}$ stays comparable to $E^\frac 1 2$ by using the
Gagliardo-Nirenberg inequality.

\end{document}